\documentclass[11pt,fleqn]{article}
\usepackage{amssymb}
\usepackage{amsfonts}
\usepackage{amsmath}
\usepackage{amsthm}
\usepackage{graphicx}
\usepackage{epsfig}
\usepackage{psfrag}
\usepackage{color}
\usepackage{dsfont}
\makeatletter
\xdef\@endgadget#1{{\unskip\nobreak\hfil\penalty50\hskip1em\hbox{}\nobreak
    \hfil#1\parfillskip=0pt\finalhyphendemerits=0\par}}
\def\@qedsymbol{${}_\blacksquare$}
\def\qed{\@endgadget{\@qedsymbol}}
\newtheorem{lemma}{Lemma}[section]

\newcommand{\R}{\mathcal{R}}

\newcommand{\bq}{\begin{equation}}
\newcommand{\eq}{\end{equation}}
\newcommand{\bma}{\begin{bmatrix}}
\newcommand{\ema}{\end{bmatrix}}
\usepackage{setspace}
\usepackage{mathtools}
\usepackage{enumitem}
\usepackage{pstool}
\usepackage{times}
\usepackage[T1]{fontenc}
\usepackage{helvet}
\usepackage{marvosym}
\usepackage{cite}
\usepackage{amsmath,amssymb,amsfonts,amscd,mathrsfs}
\usepackage{algorithmic}
\usepackage{graphicx}
\usepackage{textcomp}

\newtheorem{thm}{Theorem}

\newtheorem{prn}{Proposition}
\newtheorem{cor}{Corollary}
\newtheorem{deff}{Definition}

\newtheorem{exmp}{Example}
\newtheorem{ass}{Assumption}
\newtheorem{obs}{Remark}

\def\BibTeX{{\rm B\kern-.05em{\sc i\kern-.025em b}\kern-.08em
    T\kern-.1667em\lower.7ex\hbox{E}\kern-.125emX}}

\title{\LARGE \bf Well-Posedness and Strong Convergence of Infinite-Dimensional Port-Hamiltonian Primal-Dual Gradient Dynamics}

 \author{O. V. Iftime 
\thanks{O.V. Iftime is with the Department of Economics, Econometrics and Finance, %Faculty of Economics and Business, 
and Jan C. Willems Center for Systems and Control, 
University of Groningen, Nettelbosje 2, 9747 AE, Groningen, the Netherlands. E-mail: {\tt o.v.iftime@rug.nl}} 
\;and A. J. van der Schaft 
\thanks{A.J. van der Schaft is with the Bernoulli Institute for Mathematics, Computer Science
and Artificial Intelligence, Jan C. Willems Center for Systems and Control, University of
Groningen, PO Box 407, 9700 AK, the Netherlands. E-mail: \tt{A.J.van.der.Schaft@rug.nl}}\vspace{-20pt}
}

\begin{document}

\maketitle
\thispagestyle{empty}
\pagestyle{empty}

\begin{abstract}
We establish a functional-analytic framework that justifies the LaSalle convergence argument outlined in \cite{vanderSchaft2026PDGJuly} 
on port-Hamiltonian primal-dual gradient
dynamics for finite-horizon optimal control of linear systems. In addition to strict convexity of the running cost as in \cite{vanderSchaft2026PDGJuly}, our analysis requires global Lipschitz continuity of the cost gradient. This ensures that the nonlinear gradient operator is well-defined and
Lipschitz on the entire underlying space. 
We establish global well-posedness of the optimizer
dynamics: every initial value determines a unique global mild solution, and the associated solution operators form a contraction semigroup. Similar to \cite{Gernandt2025}, this is proved using tools from maximal monotone operator theory.
Assuming existence of an optimal solution, we combine a dissipation inequality with an infinite-dimensional LaSalle invariance principle to prove strong convergence of every trajectory with a relatively compact positive orbit to the unique optimal state, control, and costate. 
Moreover, we present a sufficient condition for compactness through uniform control regularity and show that convergence holds for all initial states. Finally, we present an example with a strictly but not strongly convex running cost for which every mild solution converges strongly to the unique equilibrium, while no global exponential estimate uniformly over initial values can be obtained.
\end{abstract}

\noindent
Keywords: Optimal control, primal-dual gradient algorithm, port-Hamiltonian systems, maximal monotone operator, strong convergence, LaSalle's Invariance principle

\section{Introduction}
Designing optimal controllers is often computationally intensive. A clear example is Model Predictive Control where optimal control laws need to be computed repeatedly, at a rate depending on the timescale of the to-be-controlled dynamics. This has motivated the search for \emph{suboptimal controllers}, which are less computationally intensive, but still performing close to optimal; see e.g. \cite{Rawlings2017}, \cite{Gernandt2025}, \cite{vanderSchaft2026PDGJuly} and the references quoted therein. In finite-dimensional constrained convex optimization a computationally attractive approach for computing constrained minima is the use of the \emph{primal-dual gradient algorithm}, dating back to the 1950s \cite{Arrow1952}. The core idea is to compromize between minimization of the function by gradient descent of the primal variables and constraint satisfaction by gradient ascent of the dual variables. Here the dual variables are the Lagrange multipliers associated with the constraints. It was shown in \cite{Stegink2015}, \cite{Stegink2017} that the continuous time version of the primal-dual gradient algorithm defines an incremental port-Hamiltonian system as introduced in \cite{Camlibel2013}, and further developed in \cite{Camlibel2023}. Furthermore, by using a Hamiltonian that is \emph{shifted} with respect to the minimum, the incremental port-Hamiltonian system becomes a standard port-Hamiltonian system. In particular, the shifted Hamiltonian decreases along solutions, with the running cost defining 'dissipation'. In \cite{Gernandt2025} this idea was generalized to the infinite-dimensional setting, thus covering the optimal control problem with convex running cost and linear system dynamics. 
% In particular, it was shown in \cite{Gernandt2025} that this defines an infinite-dimensional incremental port-Hamiltonian system (there called a maximally monotone port-Hamiltonian system), which can be transformed into a standard port-Hamiltonian system using a shifted Hamiltonian functional. 
A simplified formulation of the incremental port-Hamiltonian system corresponding to optimal control was recently provided in \cite{vanderSchaft2026PDGJuly}. This latter formulation remains closer to the equations provided by Pontryagin's Maximum Principle. It also also emphasizes the interpretation of the infinite-dimensional port-Hamiltonian system as a system of partial differential equations, involving 'algorithmic' time and 'physical' time. This pde formulation can be used as starting point for discretization methods in order to derive suboptimal controllers.

In the current paper we continue \cite{vanderSchaft2026PDGJuly} by providing an in-depth analysis of well-posedness of the obtained infinite-dimensional port-Hamiltonian system, and of convergence to the optimal solution (which was lacking in \cite{vanderSchaft2026PDGJuly}). 

At several instances our analysis is close to the one performed before in \cite{Gernandt2025}, especially with regard to well-posedness.
% where we utilize arguments from maximal monotone operator theory, similarly to \cite{Gernandt2025}. 
However, for convergence we follow a different path by providing a LaSalle Invariance principle proof, in the spirit of the classical paper \cite{SlemrodInfante1969}.

Specifically, the contributions of the current paper relative to
\cite{Gernandt2025} are as follows. Similarly to \cite{Gernandt2025}, we use maximal monotone operator theory to establish well-posedness of the infinite-dimensional primal-dual gradient optimizer dynamics proposed in \cite{vanderSchaft2026PDGJuly}. Compared to \cite{Gernandt2025} we allow for general nonseparable running costs $K(x,u)$, thus extending the cost formulation in \cite[Eq. (3.2)]{Gernandt2025}. Furthermore, we establish well-posedness and strong convergence under the assumption of \emph{strict} (not \emph{strong}) convexity, without the strong-accretivity hypothesis used in \cite{Gernandt2025}.
Example \ref{ex:logcosh_convergence} verifies the required conditions for a strictly, but not strongly, convex running cost and shows convergence for every mild initial value.    
	
A key difference with regard to \cite{Gernandt2025} is that we establish an infinite-dimensional LaSalle convergence result for the dynamics in \cite{vanderSchaft2026PDGJuly}. The dissipation inequality \eqref{eq:energy_identity_cont} and an invariance argument yield strong convergence \eqref{eq:compact_convergence_cont} of relatively compact trajectories to the unique optimal state, control, and costate. 
	Moreover, we give a sufficient condition for relative compactness of trajectories in terms of the uniform control regularity bound \eqref{eq:uniform_control_H1_cont}.
	% see Corollary \ref{cor:control_regularity_compact_cont}.
Example \ref{ex:logcosh_nonexpconv} further shows
that a global exponential estimate uniform over initial values can fail even when the underlying system is controllable. We believe that the proof of strong convergence based on the LaSalle Invariance principle has a clear value on its own; also when compared to the, less developed, proof context provided in \cite{Gernandt2025}. 
\textbf{Notation}  $\mathbb N$ and $\mathbb R$ are the sets of natural and real numbers, respectively. $i^2=-1$ and $\overline{\mathbb{R}}=[-\infty,+\infty]$. $I$ is identity. $A^\top$ denotes the transpose of a matrix $A\in\mathbb R^{n\times m}$. A matrix $A$ is positive definite when $A=A^\top>0$. We use the standard function spaces $C^1(\mathbb R^n\times\mathbb R^m ;\mathbb R)$ of continuously differentiable functions, and $C_c^\infty(0,T;\mathbb R^n)$ of smooth functions with compact support in $(0,T)$. 
For $d\ge 1$, $L^p(0,T;\mathbb R^d)$, $p=1,2$, denotes the usual Lebesgue space, with $L^p(0,T):=L^p(0,T;\mathbb R)$. 
$H^1(0,T;\mathbb R^n)$ is the space of $L^2$ functions
with weak derivatives in $L^2$, and $W^{1,\infty}(0,T;\mathbb R^n)$ denotes the essentially bounded functions with essentially bounded weak derivative. For an operator $L$, $D(L)$, $R(L)$, $\ker L$, $\rho(L)$ and $L^*$ 
denote its domain, range, kernel, resolvent and adjoint, respectively.

%\newpage
\section{Preliminaries}\label{sect_prel}

\noindent 

\begin{deff}{\cite[Definition 2.1 and p.~68]{BarbuPrecupanu2012}}
Let $\mathcal X$ be a real linear space. A function
$f:\mathcal X\to\overline{\mathbb{R}}$
is called \emph{convex} if
\begin{equation}
\label{eq: def_convex}
f\bigl(\lambda x+(1-\lambda)y\bigr)
\le
\lambda f(x)+(1-\lambda)f(y)
\end{equation}
for every $\lambda\in [0,1]$ and all $x,y\in \mathcal X$ such that the right-hand side is well defined. 
The function $f$ is called \emph{strictly convex} if \eqref{eq: def_convex} is strict for every $\lambda\in(0,1)$ and for every $x,y\in \mathcal X$ such that $x\neq y$, $f(x)<+\infty$ and 
$f(y)<+\infty$. 
A convex function $f:\mathcal X\to\overline{\mathbb{R}}$ is called \emph{proper} if $f(x)>-\infty$ for every $x\in \mathcal X$ and $f\not\equiv+\infty$. The \emph{effective domain} of $f$ is defined by
$\operatorname{Dom}(f)=\{x\in \mathcal X:\,f(x)<+\infty\}$.  
\end{deff}
%If $f$ is a convex function then $\operatorname{Dom}(f)$ is convex.
When $\mathcal E$ is a Hilbert space, $f$ is \emph{strongly convex} if
$f-\frac{\mu}{2}\|\cdot\|_{\mathcal E}^2$
is convex for some $\mu>0$. 

In the next subsection we present the incremental \emph{port-Hamiltonian formulation} of the primal-dual gradient dynamics for the finite-horizon convex optimal control problem, and subsequently its standard port-Hamiltonian form using a shifted Hamiltonian functional. 

\subsection{Port-Hamiltonian System Formulation} %(\cite{vanderSchaft2026PDG})}
\label{sect_IPHsys} 

Let \(T\in(0,\infty)\), \(A\in\mathbb R^{n\times n}\),
\(B\in\mathbb R^{n\times m}\), and \(x_0\in\mathbb R^n\). Define 
\[
\mathcal X_{x_0}
:=
\left\{
x\in H^1(0,T;\mathbb R^n):
x(0)=x_0
\right\},
\;
\mathcal U
:=
L^2(0,T;\mathbb R^m),
\]
and the admissible set
\[
\mathcal F
:=
\left\{
(x,u)\in\mathcal X_{x_0}\times\mathcal U:
\dot x=Ax+Bu
\quad\text{a.e. on }(0,T)
\right\}.
\]
Consider the \textit{optimal control problem} %of minimizing the cost 
\begin{equation}
	\label{eq:ocp_cost}
	\min_{(x,u)\in\mathcal F}
	\int_0^T K\bigl(x(t),u(t)\bigr)\,dt .
\end{equation}
The cost functional 
$\mathcal J(x,u):=
\int_0^T K\bigl(x(t),u(t)\bigr)\,dt$ 
is understood as an extended real-valued functional $\mathcal J:\mathcal F\to\overline{\mathbb R}$.
Suppose that the following assumptions are satisfied.
\begin{ass}\label{ass:K_in_L1}
	$K\bigl(x(\cdot),u(\cdot)\bigr)\in L^1(0,T)$ for all 
	$(x,u)\in\mathcal X_{x_0}\times\mathcal U$.
\end{ass}
\begin{ass}\label{ass: K C1}
  $K\in C^1(\mathbb R^n\times\mathbb R^m ;\mathbb R)$ is strictly convex.    
\end{ass}
\begin{ass}\label{ass:existence_optimizer}
Problem \eqref{eq:ocp_cost} admits an optimal solution
$
(x_\star,u_\star)
\in \mathcal F
%H^1(0,T;\mathbb R^n)
%\times
%L^2(0,T;\mathbb R^m)
$.  
\end{ass}
The cost functional $\mathcal J$ is finite-valued when Assumption \ref{ass:K_in_L1} is satisfied.
Since $\mathcal F$ is affine, and hence convex, and $K$ is strictly convex, the functional $\mathcal J$ is strictly convex on $\mathcal F$. 
Consequently, the optimal
pair $(x_\star,u_\star)$ is unique, up to equality almost everywhere. 

Introduce the costate $p(t)\in\mathbb R^n$, $t\in [0,T]$. Define the \textit{optimal control Hamiltonian} by
\begin{equation}
	\label{eq:Hopt}
	H_{\rm opt}(x,u,p):=
	p^\top(Ax+Bu)+K(x,u).
\end{equation}
Under regularity conditions ensuring the application of the Pontryagin minimum principle at $(x_\star,u_\star)$ (e.g.,  \cite{MeinsmaSchaft2023}\footnote{Under the assumptions of
\cite[Theorem 2.5.1, pp. 54-55]{MeinsmaSchaft2023},
and assuming in particular that the optimal control $u_\star$ is
piecewise continuous, there exists a unique costate satisfying the
Pontryagin optimality system. Moreover, the proof of \cite[Theorem~2.5.1]{MeinsmaSchaft2023} shows
that the adjoint equation has piecewise-continuous coefficients. Hence,
in the present linear setting,
$ 
p_\star\in W^{1,\infty}(0,T;\mathbb R^n)
\subset H^1(0,T;\mathbb R^n).
$}) 
there exists a costate $p_\star\in H^1(0,T;\mathbb R^n)$ s.t. 
\begin{equation}
	\label{eq:PMP}
	\begin{aligned}
		\dot x_\star(t)
		&=
		Ax_\star(t)+Bu_\star(t),
		\qquad
		x_\star(0)=x_0, \\
		0
		&=
		B^\top p_\star(t)+K_u(x_\star(t),u_\star(t)),
		\\
		\dot p_\star(t)
		&=
		-A^\top p_\star(t)-K_x(x_\star(t),u_\star(t)),
		\qquad
		p_\star(T)=0, 
	\end{aligned}
\end{equation}
where $K_x$ and $K_u$ denote the partial gradients of $K$ with
respect to its first and second arguments, respectively, and the differential equations and the stationarity condition hold
a.e. on $(0,T)$. The terminal condition $p_\star(T)=0$ results from the absence of a terminal cost and from the fact that $x_\star(T)$ is free. 
Equivalently, the system dynamics can be represented by the equality constraint 
$Ax+Bu-\dot x=0$ in $L^2(0,T;\mathbb R^n)$, or, equivalently, as a family of pointwise equality constraints $Ax(t)+Bu(t)-\dot x(t)=0$ a.e. on $(0,T)$. 
This leads to the infinite-dimensional Lagrangian
\begin{align}
\label{eq:Lagrangian_functional}
\mathcal L(x,u,p) :=
\int_0^T
& \Bigl[
p(t)^\top\bigl(Ax(t)+Bu(t)-\dot x(t)\bigr)\\
& +
K\bigl(x(t),u(t)\bigr)
\Bigr]\,dt ,\nonumber 
\end{align}
with 
$ \mathcal D(\mathcal L):=
    \mathcal X_{x_0}
    \times
    L^2(0,T;\mathbb R^m)
    \times
    L^2(0,T;\mathbb R^n)$. 
For every
$(x,u,p)\in\mathcal D(\mathcal L)$, one has
$Ax+Bu-\dot x\in L^2(0,T;\mathbb R^n)$, and H\"older's inequality gives $p^\top(Ax+Bu-\dot{x})\in L^1(0,T)$. Together with
Assumption~\ref{ass:K_in_L1}, this ensures that
$\mathcal L$ is well defined on $\mathcal D(\mathcal L)$.

Let $x=x(t,\tau)$, $u=u(t,\tau)$ and $p=p(t,\tau)$, where $t\in[0,T]$ is the physical time and $\tau\geq 0$ is the artificial algorithmic time $\tau\geq 0$. For every $\tau\geq 0$ impose the endpoint conditions $x(0,\tau)=x_0$ and $p(T,\tau)=0$. 
The continuous time primal-dual gradient dynamics in the artificial algorithmic time $\tau\geq 0$ (along which the primal-dual gradient dynamics evolve) is formally given by
\begin{equation}
	\label{eq:PDGCT_variational}
	\frac{\partial}{\partial \tau}
	\begin{pmatrix}
		Xx\\
		Uu\\
		Pp
	\end{pmatrix}
	=
	\begin{pmatrix}
		-\delta\mathcal L/\delta x\\
		-\delta\mathcal L/\delta u\\
		\phantom{-}\delta\mathcal L/\delta p
	\end{pmatrix},
\end{equation}
where $X\in\mathbb{R}^{n\times n}$, $U\in\mathbb{R}^{m\times m}$, and $P\in\mathbb{R}^{n\times n}$ are positive definite weighting matrices acting pointwise in $t$. 
The following calculation is formal and is performed for
$x\in\mathcal X_{x_0}$, $u\in\mathcal U$, and
$p\in H^1(0,T;\mathbb R^n)$ with $p(T)=0$, assuming that
$K_x(x,u)$ and $K_u(x,u)$ belong to their respective
$L^2$ spaces. The operator realization and well-posedness
are addressed in Section III.
Computing the variational derivatives $\delta\mathcal L/\delta x$, $\delta\mathcal L/\delta u$, and $\delta\mathcal L/\delta p$,  the primal-dual gradient dynamics take the formal differential
form (as in \cite[(17)-(18)]{vanderSchaft2026PDG})
\begin{equation}
	\label{eq:PDGCT_explicit}
	\frac{\partial}{\partial\tau}
	\begin{pmatrix}
		Xx\\
		Uu\\
		Pp
	\end{pmatrix}
	=
	J
	\begin{pmatrix}
		x\\
		u\\
		p
	\end{pmatrix}
	-
	\begin{pmatrix}
		K_x(x,u)\\
		K_u(x,u)\\
		0
	\end{pmatrix},
\end{equation}
where $J$ denotes the formal differential expression 
\begin{equation}
	\label{eq:J_formal}
	J
	:=
	\begin{pmatrix}
		0 & 0 & -\partial_t-A^\top\\
		0 & 0 & -B^\top\\
		-\partial_t+A & B & 0
	\end{pmatrix}.
\end{equation}
%Note that \eqref{eq:PDGCT_explicit} consists of a formally lossless skew part and a monotone dissipative part induced by the convex running cost $K$.
Next, introduce the variables $z(t,\tau):=Xx(t,\tau)$, $	v(t,\tau):=Uu(t,\tau)$, and $r(t,\tau):=Pp(t,\tau)$ for $t\in[0,T]$ and $\tau\geq 0$. For any fixed $\tau$, the variational derivatives are taken with respect to the functions of $t$. 
The \textit{quadratic Hamiltonian functional} is 
\begin{align}
	\label{eq:Hamiltonian_functional}
	\mathcal H(z,v,r)
	:=
	\frac12\int_0^T
	&\left[
	   z(t)^\top X^{-1}z(t)
	+
	v(t)^\top U^{-1}v(t)\right.\\
	& +
	  \left. r(t)^\top P^{-1}r(t)
	\right]dt . \nonumber
\end{align}
With respect to the standard $L^2$-inner products, its variational derivatives are $\delta\mathcal H/\delta z=X^{-1}z=x$, $\delta\mathcal H/\delta v=U^{-1}v=u$, and  $\delta\mathcal H/\delta r=P^{-1}r=p$. Then, as shown in \cite{vanderSchaft2026PDG}, \eqref{eq:PDGCT_explicit} defines an infinite-dimensional incremental port-Hamiltonian system in the sense of \cite{Camlibel2013, Camlibel2023}.
%\eqref{eq:PDGCT_explicit}  has a formal \textit{infinite-%dimensional port-Hamiltonian representation}. 
Furthermore, let
$z_\star(t):=Xx_\star(t)$, 
$v_\star(t):=Uu_\star(t)$, 
$r_\star(t):=Pp_\star(t)$, 
where $(x_\star,u_\star,p_\star)$ solves the Pontryagin system \eqref{eq:PMP}.  Define the \textit{shifted Hamiltonian functional} 
\begin{equation}
	\label{eq:shifted_Hamiltonian}
	\begin{aligned}
		\mathcal H_e(z,v,r)
		:=
		\frac12\int_0^T
		&\Big[
		(z-z_\star)^\top X^{-1}(z-z_\star)\\
		&+
		(v-v_\star)^\top U^{-1}(v-v_\star)
		\\
		&+
		(r-r_\star)^\top P^{-1}(r-r_\star)
		\Big]dt .
	\end{aligned}
\end{equation}
Since $(x_\star,u_\star,p_\star)$ is an equilibrium of \eqref{eq:PDGCT_explicit}, the shifted system is 
\begin{equation}
	\label{eq:shifted_pH_system}
	\frac{\partial}{\partial\tau}
	\begin{pmatrix}
		z\\
		v\\
		r
	\end{pmatrix}
	=
	J
	\begin{pmatrix}
		\delta\mathcal H_e/\delta z\\
		\delta\mathcal H_e/\delta v\\
		\delta\mathcal H_e/\delta r
	\end{pmatrix}
	-
	\begin{pmatrix}
		K_x(x,u)-K_x(x_\star,u_\star)\\
		K_u(x,u)-K_u(x_\star,u_\star)\\
		0
	\end{pmatrix}.
\end{equation}
Formally, \eqref{eq:shifted_pH_system} constitutes a true \textit{infinite-dimensional port-Hamiltonian system}, cf. \cite{vanderSchaft2026PDG}. It forms the starting point of our well-posedness and convergence analysis of the primal-dual gradient PDE. To  perform a rigorous analysis, we need several functional analytic results. These are presented in the next subsection. 

\vspace{-0.1cm}
\subsection{Functional Analytic Preliminaries}\label{sect_FuncAnPrel}

We recall some facts from convex analysis and maximal monotone operator theory. Throughout this section, $\mathcal X$ denotes a real Banach space with dual $\mathcal X^*$, and $\langle\cdot,\cdot\rangle$ denotes the duality pairing between $\mathcal X$ and $\mathcal X^*$. 
5
Recall the following terminology, used for instance in
\cite[Theorem 1.106, p. 36]{BarbuPrecupanu2012}. 
\begin{deff}{\cite[Definition 1.137, p. 53]{BarbuPrecupanu2012}}
Let $\mathcal X$ be a linear Banach space. A single-valued operator
$
T:\mathcal X\to \mathcal X^*
$
is \emph{demicontinuous} if
\(x_n\to x\) strongly in $\mathcal X$ implies
$Tx_n\rightharpoonup^* Tx$ weak-$*$ in $\mathcal X^*$.
\end{deff}
When $\mathcal X$ is a Hilbert space, $\mathcal X^*$ is identified with $\mathcal X$ via the Riesz representation theorem. Hence, an operator 
$T:\mathcal X\rightarrow \mathcal X$ is demicontinuous if $x_n\to x$ strongly in $X$ implies
$Tx_n\rightharpoonup Tx$
weakly in $\mathcal X$.  
Since strong convergence implies weak convergence, every strongly continuous operator is demicontinuous. Consequently, every globally Lipschitz operator is demicontinuous.

%Next, recall the notion of subdifferential. 
\begin{deff}{\cite[Definition 2.30, p.~82]{BarbuPrecupanu2012}}
\label{deff: Definition 2.30}
Let $f:\mathcal X\to(-\infty,+\infty]$ be a proper convex function.
The \emph{subdifferential} of $f$ is the (generally multivalued)
mapping
$
\partial f:\mathcal X\rightrightarrows \mathcal X^*
$
defined, for $x\in\operatorname{Dom}(f)$, by
\begin{equation}\label{eqn: Eq 2.33 Barbu}
\partial f(x)
=
\left\{
x^*\in \mathcal X^* :
f(x)-f(u)\le \langle x-u,x^*\rangle,
\quad \forall\,u\in \mathcal X
\right\}.
\end{equation}
For $x\notin\operatorname{Dom}(f)$, we set
$\partial f(x):=\varnothing$.
The elements $x^*\in\partial f(x)$ are called
\textit{subgradients} of $f$ at $x$.
\end{deff}

From \eqref{eqn: Eq 2.33 Barbu} 
%and the above convention,
it follows that $\partial f(x)$ is always a closed convex
subset of $\mathcal X^*$.
The set $\partial f(x)$ may be empty (for example, if $f(x)=+\infty$).
\begin{deff}{\cite[Definition 1.135, p. 53]{BarbuPrecupanu2012}}
Let $\mathcal X$ be a Banach space with dual $\mathcal X^*$. An operator (or relation) $T\subset \mathcal X\times \mathcal X^*$ is called \textit{monotone} if
$\langle x-y,x^*-y^*\rangle\geq 0$, 
for all $(x,x^*),(y,y^*)\in T$. It is called \textit{maximal monotone} if it is monotone and admits no proper monotone extension, i.e., there exists no monotone operator 
$\widetilde{T}\subset \mathcal X\times \mathcal X^*$ such that $T\subsetneq\widetilde{T}$.
\end{deff}
%
% By \cite[Theorem 2.43, pg. 88]{BarbuPrecupanu2012}, the subdifferential of a proper convex and lower-semicontinuous function on a linear Banach space is maximal monotone.
%
\begin{thm}{\cite[Theorem 2.43, pg. 88]{BarbuPrecupanu2012}}
\label{thm: Theorem 2.43}
Let $\mathcal X$ be a real linear Banach space, and let $f$ be a \emph{lower semicontinuous} and \emph{proper convex} function. Then the subdifferential operator
$\partial f:\mathcal X\rightrightarrows \mathcal X^*$ 
is a maximal monotone operator from $\mathcal X$ into $\mathcal X^*$.
\end{thm}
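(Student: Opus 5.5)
We verify monotonicity directly and obtain maximality from Ekeland's variational principle combined with the Moreau--Rockafellar sum rule. The sum rule is needed because $\mathcal X$ is a general Banach space: it need not be reflexive, so minimizers of convex lower semicontinuous functions need not exist and we can only work with approximate minimizers.

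\emph{Monotonicity.} Let $x^*\in\partial f(x)$ and $y^*\in\partial f(y)$. By \eqref{eqn: Eq 2.33 Barbu},
\[
f(x)-f(y)\le\langle x-y,x^*\rangle,\qquad f(y)-f(x)\le\langle y-x,y^*\rangle .
\]
Both $x,y$ lie in $\operatorname{Dom}(f)$, so all values are finite. Adding the two inequalities gives $\langle x-y,x^*-y^*\rangle\ge0$.

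\emph{Reduction.} Suppose $(x_0,x_0^*)\in\mathcal X\times\mathcal X^*$ satisfies $\langle x-x_0,x^*-x_0^*\rangle\ge0$ for all $(x,x^*)$ in the graph of $\partial f$. We must show $x_0^*\in\partial f(x_0)$. Define
\[
g(x):=f(x+x_0)-\langle x,x_0^*\rangle .
\]
Then $g$ is proper, convex and lower semicontinuous, and $\partial g(x)=\partial f(x+x_0)-x_0^*$. The hypothesis becomes $\langle y,y^*\rangle\ge0$ for every $y^*\in\partial g(y)$, and the goal becomes $0\in\partial g(0)$, i.e.\ $g(u)\ge g(0)$ for all $u$ (this also gives $0\in\operatorname{Dom}(g)$).

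\emph{Approximate minimization.} By Hahn--Banach separation of the closed convex epigraph, $g$ has a continuous affine minorant $g\ge\langle\cdot,a^*\rangle+c$. Hence, for each $\varepsilon>0$,
\[
\varphi:=g+\tfrac12\|\cdot\|^2
\]
is proper, lower semicontinuous and bounded below. Ekeland's principle yields $x_\varepsilon$ such that $x_\varepsilon$ minimizes $\varphi+\sqrt\varepsilon\|\cdot-x_\varepsilon\|$. Thus
\[
0\in\partial\bigl(g+\tfrac12\|\cdot\|^2+\sqrt\varepsilon\|\cdot-x_\varepsilon\|\bigr)(x_\varepsilon).
\]
The two norm terms are convex and continuous everywhere, so the Moreau--Rockafellar sum rule applies. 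It gives $y^*\in\partial g(x_\varepsilon)$, $w^*$ in the duality map at $x_\varepsilon$ (so $\langle x_\varepsilon,w^*\rangle=\|x_\varepsilon\|^2$ and $\|w^*\|=\|x_\varepsilon\|$), and $\|b^*\|\le\sqrt\varepsilon$, with $y^*=-w^*-b^*$. The reduced hypothesis then gives
\[
0\le\langle x_\varepsilon,y^*\rangle\le-\|x_\varepsilon\|^2+\sqrt\varepsilon\|x_\varepsilon\| .
\]
Hence $\|x_\varepsilon\|\le\sqrt\varepsilon$ and $\|y^*\|\le2\sqrt\varepsilon$.

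\emph{Conclusion.} For any $u\in\mathcal X$, the subgradient inequality gives
\[
g(u)\ge g(x_\varepsilon)+\langle u-x_\varepsilon,y^*\rangle\ge g(x_\varepsilon)-2\sqrt\varepsilon\bigl(\|u\|+\sqrt\varepsilon\bigr).
\]
Let $\varepsilon\to0$. Then $x_\varepsilon\to0$, and lower semicontinuity gives $\liminf g(x_\varepsilon)\ge g(0)$, so $g(u)\ge g(0)$ for all $u$. Therefore $0\in\partial g(0)$, i.e.\ $x_0^*\in\partial f(x_0)$, and $\partial f$ is maximal monotone.

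The main obstacle is the approximate-minimization step: we must apply the sum rule correctly in a nonreflexive space and keep quantitative control of the perturbation $b^*$. This is exactly why we combine Ekeland's principle with the continuity of the norm terms, rather than taking an exact minimizer of $g+\frac12\|\cdot\|^2$, which need not exist.
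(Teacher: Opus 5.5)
Your proof is correct, and it covers the statement in full generality: an arbitrary, possibly nonreflexive, real Banach space. The paper gives no argument of its own here. The result is quoted from Barbu--Precupanu and used only on the Hilbert spaces $\mathcal X$ and $\mathcal X_W$. So the natural comparison is with the range-condition route suggested by Theorem~\ref{thm: Theorem 1.141}, recalled right after the statement.

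In that route one works in a reflexive space, renormed so that $\mathcal X$ and $\mathcal X^*$ are strictly convex, and shows $R(\partial f+F)=\mathcal X^*$. For each $y^*\in\mathcal X^*$ the functional $f+\frac12\|\cdot\|^2-\langle\cdot,y^*\rangle$ is coercive and weakly lower semicontinuous, so it has a minimizer $x$. The sum rule then turns $0\in\partial\bigl(f+\frac12\|\cdot\|^2-\langle\cdot,y^*\rangle\bigr)(x)$ into $y^*\in\partial f(x)+F(x)$.

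That route is shorter and is all the paper needs: in a Hilbert space $F=I$, and the minimizer exists by strong convexity. It cannot reach nonreflexive spaces, however. For $f\equiv0$ one has $R(\partial f+F)=R(F)$, the set of norm-attaining functionals. By James' theorem this is a proper subset of $\mathcal X^*$, even though $\partial f\equiv\{0\}$ is maximal monotone.

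Your argument sidesteps this by checking maximality directly from the definition. The Ekeland point $x_\varepsilon$ replaces the exact minimizer, which may not exist. The monotone-relatedness hypothesis is exactly what forces $\|x_\varepsilon\|\le\sqrt\varepsilon$ and $\|y^*\|\le2\sqrt\varepsilon$.

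The ingredients you use are all independent of the theorem, so there is no circularity:
\begin{itemize}
\item an affine minorant obtained from Hahn--Banach;
\item Ekeland's principle on the complete space $\mathcal X$;
\item the Moreau--Rockafellar rule with an everywhere continuous summand.
\end{itemize}
The only price is that these are heavier tools than the elementary minimization available in the reflexive case.
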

%
%\noindent So, Theorem \ref{thm: Theorem 2.43} gives conditions when (in our case) our subdifferential operator is maximal monotone. To prove these, we use Lemma \ref{lem: Corollary 1.140} and the  characterization for maximal monotone operators provided in Theorem \ref{thm: Theorem 1.141}.

\begin{lemma}{\cite[Corollary 1.140, p. 54]{BarbuPrecupanu2012}}
\label{lem: Corollary 1.140}
Let $\mathcal X$ be a reflexive Banach space and 
$M:\mathcal X\to \mathcal X^*$ be a \emph{monotone}, \emph{demicontinuous} and \emph{bounded} operator (maps bounded subsets of $\mathcal X$ into bounded subsets of $\mathcal X^*$, \cite[Definition 1.136, p. 53]{BarbuPrecupanu2012}) \emph{satisfying the coercivity condition}
%\[
$\displaystyle
\frac{\langle Mx,x\rangle}{\|x\|}
\rightarrow +\infty$,
as $\|x\|\to\infty$,
%\]
i.e., condition \cite[$(1.104)$, p. 54]{BarbuPrecupanu2012}.

Let $T\subset \mathcal X\times \mathcal X^*$ be \emph{a maximal monotone operator}. Then
$
R(T+M)=\mathcal X^*,
$
where $R(T+M)$ denotes the range of %the operator 
$T+M$.
\end{lemma}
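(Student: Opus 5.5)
The plan is to reduce the statement to two classical facts: Rockafellar's sum theorem and Browder's surjectivity theorem for coercive maximal monotone operators on reflexive spaces. Fix an arbitrary $y^*\in\mathcal X^*$; we must find $x\in D(T)$ with $y^*\in Tx+Mx$. I will assume, as is implicit, that $D(T)\neq\varnothing$.

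\emph{Step 1: $M$ and $T+M$ are maximal monotone.} Since $M$ is single-valued, monotone, demicontinuous and defined on all of $\mathcal X$, a Minty-type argument shows it is maximal monotone. Take $(x,x^*)$ monotonically related to the graph of $M$, and test with $u=x+t w$ for $t\downarrow 0$ and arbitrary $w$. Demicontinuity then gives $\langle w, Mx-x^*\rangle\ge 0$ for all $w$, so $x^*=Mx$. Because $D(M)=\mathcal X$, we have $\operatorname{int}D(M)\cap D(T)=D(T)\neq\varnothing$. Rockafellar's sum theorem in reflexive spaces then makes $T+M$ maximal monotone.

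\emph{Step 2: coercivity of $T+M$.} Fix $x_0\in D(T)$ and $x_0^*\in Tx_0$. For $x^*\in Tx$, monotonicity of $T$ gives
\[
\langle x-x_0,x^*\rangle\ge \langle x-x_0,x_0^*\rangle .
\]
Hence
\[
\langle x-x_0,x^*+Mx\rangle\ge \langle x,Mx\rangle-\langle x_0,Mx\rangle-\|x_0^*\|\,(\|x\|+\|x_0\|).
\]
The delicate term is $\langle x_0,Mx\rangle$, since boundedness of $M$ alone does not give linear growth. I would handle it by monotonicity of $M$ between $x$ and $x_0$:
\[
\langle x-x_0,Mx\rangle\ge\langle x-x_0,Mx_0\rangle .
\]
This yields
\[
\langle x-x_0,x^*+Mx\rangle\ge \langle x-x_0,Mx_0+x_0^*\rangle .
\]
That lower bound is only linear, so it must be combined with the original coercivity. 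Writing $\langle x-x_0,Mx\rangle=\langle x,Mx\rangle-\langle x_0,Mx\rangle$, I would try to show that $\langle x_0,Mx\rangle=o(\langle x,Mx\rangle)$ along sequences with $\|x\|\to\infty$. If this fails directly, I would fall back on the standard device of replacing $T$ by its translate $\tilde T(x)=T(x+x_0)$ and $M$ by $M(\cdot+x_0)$, so that $0\in D(\tilde T)$, and on the equivalence of coercivity at $0$ and at $x_0$ for monotone operators in Barbu--Precupanu. I expect this step to be the \textbf{main obstacle}. Its goal is
\[
\frac{\langle x-x_0,x^*+Mx-y^*\rangle}{\|x\|}\to+\infty \quad\text{as } \|x\|\to\infty .
\]

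\emph{Step 3: surjectivity of a coercive maximal monotone operator.} By Asplund's renorming, take equivalent norms so that $\mathcal X$ and $\mathcal X^*$ are strictly convex, and let $F$ be the (single-valued, demicontinuous, maximal monotone) duality map. For $\lambda>0$, Minty--Browder gives $R(\lambda F+S)=\mathcal X^*$ for maximal monotone $S=T+M$. So there is $x_\lambda$ with $y^*\in\lambda Fx_\lambda+Sx_\lambda$.

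Testing this inclusion against $x_\lambda-x_0$ and using Step 2 shows that $\{x_\lambda\}$ is bounded as $\lambda\downarrow0$. Then $\lambda Fx_\lambda\to 0$ strongly, and $y^*-\lambda Fx_\lambda\in Sx_\lambda$. By reflexivity, a subsequence satisfies $x_\lambda\rightharpoonup x$. The graph of a maximal monotone operator is closed in (weak)$\times$(strong). Therefore $y^*\in Sx=Tx+Mx$, which proves $R(T+M)=\mathcal X^*$.
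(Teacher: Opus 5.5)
Your Steps 1 and 3 are correct: the Minty-type argument for maximality of $M$, Rockafellar's sum theorem with $D(M)=\mathcal X$, and the limit $\lambda\downarrow0$ via weak--strong closedness of the graph all work. The gap is Step 2, the one place where the coercivity hypothesis must be turned into something usable; you leave it open, and neither proposed remedy closes it.

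Monotonicity of $M$ between $x$ and $x_0$ discards the coercive term $\langle x,Mx\rangle$ entirely and leaves only a linear lower bound, as you observed. The translation device only relocates the problem. Coercivity of $M(\cdot+x_0)$ at $0$ is exactly the statement $\langle My,y-x_0\rangle/\|y\|\to+\infty$, i.e.\ coercivity of $M$ at $x_0$, which is what you are trying to prove. Citing an ``equivalence of coercivity at $0$ and at $x_0$'' is therefore citing precisely the missing step. You also cannot sidestep it by testing against $x_\lambda$ instead of $x_\lambda-x_0$: the uncontrolled term then becomes $\langle x_0,x_\lambda^*\rangle$ with $x_\lambda^*\in Tx_\lambda$. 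So as written, the boundedness of $\{x_\lambda\}$ in Step 3 is not established.

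The missing idea is to apply monotonicity against $2x_0$ to only half of $x-x_0$. Write $x-x_0=\tfrac12x+\tfrac12(x-2x_0)$ and use $\langle x-2x_0,Mx-M(2x_0)\rangle\ge0$. This gives
\[
\langle x-x_0,Mx\rangle\;\ge\;\tfrac12\langle x,Mx\rangle-\tfrac12\|M(2x_0)\|_{\mathcal X^*}\bigl(\|x\|+2\|x_0\|\bigr),
\]
so $\langle x-x_0,Mx\rangle/\|x\|\to+\infty$. Combined with $\langle x-x_0,x^*\rangle\ge\langle x-x_0,x_0^*\rangle$, this yields your Step 2 target uniformly in $x^*\in Tx$. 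With this inserted the proof is complete, and it never uses boundedness of $M$, so your route actually gives a slightly more general statement.

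The price is that your route rests on Rockafellar's sum theorem and the Minty--Browder theorem, both deeper than the statement itself. The paper gives no proof and only cites Barbu--Precupanu. In the usual textbook development, Minty--Browder (Theorem~\ref{thm: Theorem 1.141} here) is derived from this very corollary with $M=\lambda F$. To avoid circularity, you must therefore rely on independent proofs of those two theorems, e.g.\ via Fitzpatrick functions.
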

\begin{thm}{\cite[Theorem 1.141, pg. 54-55]{BarbuPrecupanu2012}}
\label{thm: Theorem 1.141}
Let $\mathcal X$ be a reflexive Banach space with a norm
such that both $\mathcal X$ and $\mathcal X^*$, with the corresponding dual norm,
are strictly convex. Let $F:\mathcal X\to \mathcal X^*$ denote the normalized
duality mapping, characterized by
%\[
$\langle x,F(x)\rangle
=\|x\|_\mathcal X^2
=\|F(x)\|_{\mathcal X^*}^2$,
%\qquad 
$x\in \mathcal X$.
%\]
Let $T\subset \mathcal X\times \mathcal X^*$ be \emph{any monotone subset}. Then $T$ is maximal monotone in $\mathcal X\times \mathcal X^*$ if and only if, for every $\lambda>0$ (equivalently, for some $\lambda>0$), we have the range condition
$
R(T+\lambda F)=\mathcal X^*.
$
\end{thm}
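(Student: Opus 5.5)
The plan is to prove the two implications separately. The ``only if'' direction reduces to Lemma~\ref{lem: Corollary 1.140} applied with $M=\lambda F$. The ``if'' direction is a direct argument that uses the strict convexity of both $\mathcal X$ and $\mathcal X^*$ to obtain strict monotonicity of $F$. The equivalence of ``for every $\lambda>0$'' and ``for some $\lambda>0$'' then follows by chaining the two: the range condition for one $\lambda$ gives maximality, and maximality gives the range condition for every $\lambda$.

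\emph{Properties of $F$.} Since $\mathcal X^*$ is strictly convex, the set $\{x^*:\langle x,x^*\rangle=\|x\|^2=\|x^*\|^2\}$ is a singleton, because it is a convex subset of a sphere. Hence $F$ is single-valued. Writing $\langle x-y,Fx-Fy\rangle\ge \|x\|^2-2\|x\|\|y\|+\|y\|^2=(\|x\|-\|y\|)^2\ge0$ shows that $F$ is monotone. It is bounded, since $\|Fx\|=\|x\|$, and coercive, since $\langle Fx,x\rangle/\|x\|=\|x\|\to\infty$.

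For demicontinuity, take $x_n\to x$. Then $(Fx_n)$ is bounded, so by reflexivity every subsequence has a further subsequence converging weakly to some $f$ with $\|f\|\le\|x\|$. Passing to the limit gives $\langle x,f\rangle=\lim\langle x_n,Fx_n\rangle=\|x\|^2$, which forces $\|f\|=\|x\|$. By uniqueness, $f=Fx$, so the whole sequence converges weakly to $Fx$. With these properties, Lemma~\ref{lem: Corollary 1.140} applied to $M=\lambda F$ gives $R(T+\lambda F)=\mathcal X^*$ for every $\lambda>0$ whenever $T$ is maximal monotone.

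\emph{Range condition implies maximality.} Let $(x_0,x_0^*)$ satisfy $\langle x_0-x,x_0^*-x^*\rangle\ge0$ for all $(x,x^*)\in T$. By surjectivity, pick $(x,x^*)\in T$ with $x^*+\lambda Fx=x_0^*+\lambda Fx_0$. Substituting $x_0^*-x^*=\lambda(Fx-Fx_0)$ gives $\langle x_0-x,Fx-Fx_0\rangle\ge0$, i.e.\ $\langle x-x_0,Fx-Fx_0\rangle\le0$. Combined with monotonicity of $F$, equality holds throughout the estimate above. This gives $\|x\|=\|x_0\|$ and $\langle x,Fx_0\rangle=\|x\|\|x_0\|=\langle x_0,Fx_0\rangle$.

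If $x_0\neq0$, the functional $Fx_0/\|x_0\|$ attains its norm at the two unit vectors $x/\|x\|$ and $x_0/\|x_0\|$. Hence it attains it on the whole segment between them, and strict convexity of $\mathcal X$ forces $x=x_0$. If $x_0=0$, then $\|x\|=0$ directly. Consequently $Fx=Fx_0$ and $x^*=x_0^*$, so $(x_0,x_0^*)\in T$ and $T$ is maximal.

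I expect the main obstacle to be the demicontinuity of $F$ together with the strict-monotonicity step. These are exactly the places where reflexivity and the two strict convexity assumptions enter. The remaining steps are direct consequences of Lemma~\ref{lem: Corollary 1.140} and the definitions.
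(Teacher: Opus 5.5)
The paper gives no proof of this statement; it is quoted from \cite[Theorem 1.141]{BarbuPrecupanu2012}. Your argument is correct and is essentially the standard proof in that source. Necessity follows from Lemma~\ref{lem: Corollary 1.140} with $M=\lambda F$, once $F$ is shown to be single-valued, monotone, bounded, coercive and demicontinuous; your subsequence argument for demicontinuity, via reflexivity of $\mathcal X^*$ and weak lower semicontinuity of the norm, is fine. Sufficiency follows from the strict monotonicity of $F$ that strict convexity of $\mathcal X$ forces.

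Keep in mind that the ``only if'' direction rests entirely on Lemma~\ref{lem: Corollary 1.140}, a genuine surjectivity theorem. It must be available independently of the present statement, as it is in the source where it precedes Theorem 1.141; otherwise the argument would be circular.
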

For $\mathcal X$ real Hilbert space, %$F=I$ and 
the range condition is $R(T+\lambda I)=\mathcal X$.

%%%%%%%%%%%%%%%%%%%%%%%%%%%%%%%%%%%%%%%%%%%%%%%%%%%%%%%%%%%%%%%%%

\section{Well-posedness of the optimizer dynamics}

The operator $J$, introduced formally in \eqref{eq:J_formal} %(Subsection~\ref{sect_IPHsys}, 
acts on triples 
$(x,u,p)$ with $x,p\in H^1(0,T;\mathbb R^n)$ and
$u\in L^2(0,T;\mathbb R^m)$, satisfying the boundary conditions
$x(0)=x_0$ and $p(T)=0$. Its range is contained in the Hilbert space
\begin{equation}
\label{eqn:X_Definition}
\mathcal X:=
L^2(0,T;\mathbb R^n)
\times
L^2(0,T;\mathbb R^m)
\times
L^2(0,T;\mathbb R^n).
\end{equation}

For the rigorous well-posedness analysis, we first homogenize the
inhomogeneous initial condition by writing
%\[
$x=\widetilde x+\bar x$,
%\qquad
$\bar x(0)=x_0$,
%\]
with $\bar x \in H^1(0,T;\mathbb R^n)$ a fixed function independent of algorithmic time, so that $\widetilde x(0)=0$. Define the operator
$
J:D(J)\subset \mathcal X\to \mathcal X
$ 
by
\begin{equation}
\label{eqn:J_Definition}
J
\begin{pmatrix}
\widetilde{x}\\
u\\
p
\end{pmatrix}
=
\begin{pmatrix}
-\dot{p}-A^\top p\\
-B^\top p\\
-\dot{\widetilde{x}}+A\widetilde{x}+Bu
\end{pmatrix},
\end{equation}
with domain
\begin{equation}
\label{eqn:J_Domain}
D(J)
=
\left\{
(\widetilde{x},u,p)\in X:
\begin{array}{l}
\widetilde{x}\in H^1(0,T;\mathbb R^n),\\
u\in L^2(0,T;\mathbb R^m),\\
p\in H^1(0,T;\mathbb R^n),\\
\widetilde{x}(0)=0,\quad p(T)=0
\end{array}
\right\}.
\end{equation}
The optimizer state is regarded as an element
$\xi=(\widetilde{x},u,p)\in X$; the action $J\xi$ is defined for $\xi\in D(J)$.

\begin{prn}\label{prn:-J_max_monotone}
    The operator $-J:D(J)\subset \mathcal X\to \mathcal X$, with domain \eqref{eqn:J_Domain} and $J$ as in \eqref{eqn:J_Definition}, is skew-adjoint and maximal monotone.
\end{prn}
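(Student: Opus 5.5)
We will prove skew-adjointness of $J$ directly, i.e.\ $J^*=-J$ with $D(J^*)=D(J)$. Maximal monotonicity of $-J$ then follows: $\langle -J\xi,\xi\rangle=0$ gives monotonicity, and for a closed densely defined operator $L$ with $L^*=-L$, both $I-J$ and $I+J$ are injective with closed range. The range of $I-J$ is dense because its orthogonal complement is $\ker(I-J^*)=\ker(I+J)=\{0\}$, so $R(I-J)=\mathcal X$. Hence Theorem \ref{thm: Theorem 1.141}, with $\lambda=1$ in the Hilbert space $\mathcal X$, yields maximal monotonicity of $-J$.

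\emph{Step 1: $J$ is skew-symmetric.} For $\xi=(\widetilde x,u,p)$ and $\eta=(\widetilde y,w,q)$ in $D(J)$ we have
\[
\langle J\xi,\eta\rangle=\int_0^T\bigl[(-\dot p-A^\top p)^\top\widetilde y-(B^\top p)^\top w+(-\dot{\widetilde x}+A\widetilde x+Bu)^\top q\bigr]dt .
\]
The $A$ and $B$ terms cancel against the corresponding terms of $\langle \xi,J\eta\rangle$ with opposite sign. For the derivative terms, integration by parts for $H^1$ functions gives
\[
-\int_0^T \dot p^\top\widetilde y\,dt=\int_0^T p^\top\dot{\widetilde y}\,dt-\bigl[p^\top\widetilde y\bigr]_0^T ,
\]
and the boundary term vanishes because $p(T)=0$ and $\widetilde y(0)=0$; the same holds for the $\dot{\widetilde x}^\top q$ term. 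Hence $\langle J\xi,\eta\rangle=-\langle\xi,J\eta\rangle$, so $-J\subset J^*$. Density of $D(J)$ is clear, since it contains $C_c^\infty\times L^2\times C_c^\infty$.

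\emph{Step 2 (main obstacle): $D(J^*)\subset D(J)$.} Let $\eta=(\widetilde y,w,q)\in D(J^*)$ with $J^*\eta=(f,g,h)$, so that $\langle J\xi,\eta\rangle=\langle\xi,(f,g,h)\rangle$ for all $\xi\in D(J)$.

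First take $\xi=(0,u,0)$ with $u$ arbitrary. This gives $g=B^\top q$.

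Next take $\xi=(\widetilde x,0,0)$ with $\widetilde x\in C_c^\infty$. This gives $\int(-\dot{\widetilde x}+A\widetilde x)^\top q=\int\widetilde x^\top f$, so the weak derivative of $q$ is $\dot q=f-A^\top q\in L^2$, and therefore $q\in H^1$. Repeating with $\xi=(0,0,p)$, $p\in C_c^\infty$, gives $\widetilde y\in H^1$ with $\dot{\widetilde y}=-h+A\widetilde y+Bw$, after first pairing against $u$ to handle the $B$ term.

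Now that regularity is known, take general test functions: $\widetilde x\in H^1$ with $\widetilde x(0)=0$ and $\widetilde x(T)$ arbitrary, and similarly $p$ with $p(T)=0$ and $p(0)$ arbitrary. Integrating by parts, the identity leaves only the boundary terms $-\widetilde x(T)^\top q(T)$ and $p(0)^\top\widetilde y(0)$. Since these endpoint values are arbitrary, we get $q(T)=0$ and $\widetilde y(0)=0$. Thus $\eta\in D(J)$ and $J^*\eta=-J\eta$.

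Combining the two steps, $J^*=-J$. Closedness of $J$ is then automatic, since adjoints are closed. This completes the plan.
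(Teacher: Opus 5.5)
Your proposal is correct and follows the paper's proof essentially step for step: integration by parts gives $-J\subset J^*$; testing the adjoint identity with $(0,u,0)$ and with compactly supported triples gives $H^1$ regularity of $\widetilde y,q$; general $H^1$ test functions then give $\widetilde y(0)=q(T)=0$; and $R(I-J)=\mathcal X$ follows from closedness of $J$ and $\ker(I+J)=\{0\}$, so the Hilbert-space range characterization yields maximal monotonicity. One small sign slip: testing with $(0,0,p)$ gives $\dot{\widetilde y}=h+A\widetilde y+Bw$ rather than $-h$, which is exactly what makes $J^*\eta=-J\eta$ consistent, and no prior pairing against $u$ is needed there because $w\in L^2$ is given.
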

\begin{proof} 
A proof is provided in Appendix \ref{App: proof_Proposition1}.  
\end{proof}
Define  $\Phi : \mathcal X \rightarrow (-\infty,+\infty]$ as 
\begin{equation}
    \label{eqn: Phi_Definition}
\Phi(\widetilde x,u,p)
:=
\int_0^T
K(\widetilde x(t)+\bar x(t),u(t))
\,dt,
\end{equation}
if $K\bigl(\widetilde{x}+\bar{x},u\bigr)\in L^1(0,T)$, and set $\Phi(\widetilde x,u,p)=+\infty$ otherwise. 
Here $K:\mathbb{R}^n\times\mathbb{R}^m\to\mathbb{R}$ is the running cost (see Section \ref{sect_prel}), and
\begin{equation}
\label{eqn: Phi_Domain}
D(\Phi)=
\left\{
(\widetilde x,u,p)\in \mathcal X:
K(\widetilde x+\bar x,u)\in L^1(0,T)
\right\}.
\end{equation}
For every $(\widetilde x,u,p)\in D(J)$, one has
$(\widetilde x+\bar x,u)\in\mathcal X_{x_0}\times\mathcal U$.
Consequently, Assumption \ref{ass:K_in_L1} ensures that
$\Phi$ is finite-valued on $D(J)$, but does not give that $\Phi$ is finite-valued on whole $\mathcal X$.
Thus
$D(J)\subseteq\operatorname{dom}\Phi\subseteq \mathcal X$, and $\Phi$ does not depend on $p$ (so $p$ is arbitrary in $L^2$) so $D(J)$ may not be equal to $D(\Phi)$. 
We next state the following result. 
% show that $\Phi$ is proper, convex, and lower
% semicontinuous with respect to the norm topology of $\mathcal X$, and that its subdifferential (see Definition \ref{deff: Definition 2.30}) is maximal monotone. 
%
\begin{prn}\label{prn:Phi_max_monotone}
    $\Phi$ is proper convex and lower semicontinuous and $\partial\Phi$ is maximal monotone on $\mathcal X$. 
\end{prn}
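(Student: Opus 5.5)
The plan is to verify the three properties of $\Phi$ directly on $\mathcal X$ and then apply Theorem \ref{thm: Theorem 2.43}. Since $\mathcal X$ is a real Hilbert space, that theorem then immediately gives maximal monotonicity of $\partial\Phi:\mathcal X\rightrightarrows\mathcal X$. Throughout, write $y:=(\widetilde x+\bar x,u)\in L^2(0,T;\mathbb R^{n+m})$; $\Phi$ depends on $\xi=(\widetilde x,u,p)$ only through $y$.

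\emph{Affine minorant.} Everything rests on a global affine minorant of $K$. By Assumption \ref{ass: K C1}, $K$ is convex and $C^1$, so the gradient inequality gives
\[
K(w)\ \ge\ \ell(w):=K(0)+\nabla K(0)^\top w,\qquad w\in\mathbb R^{n+m}.
\]
Since $T<\infty$, $L^2(0,T)\subset L^1(0,T)$. Hence for every $y\in L^2$ the function $\ell(y(\cdot))$ is integrable. Moreover, the map $y\mapsto\int_0^T\ell(y)\,dt$ is continuous on $L^2$.

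\emph{Properness.} $\Phi$ takes values in $(-\infty,+\infty]$ by construction, so it never equals $-\infty$. It is not identically $+\infty$: for example $(0,0,0)\in D(J)$, and by Assumption \ref{ass:K_in_L1} we get $\Phi(0,0,0)=\int_0^T K(\bar x,0)\,dt<\infty$.

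\emph{Convexity.} Take $\xi_1,\xi_2$ and $\lambda\in(0,1)$. If either $\Phi(\xi_1)$ or $\Phi(\xi_2)$ equals $+\infty$, the inequality \eqref{eq: def_convex} is trivial. Otherwise, pointwise convexity of $K$ gives
\[
\ell(\lambda y_1+(1-\lambda)y_2)\ \le\ K(\lambda y_1+(1-\lambda)y_2)\ \le\ \lambda K(y_1)+(1-\lambda)K(y_2)\quad\text{a.e.}
\]
Both outer functions are integrable. So $K(\lambda y_1+(1-\lambda)y_2)\in L^1$, i.e.\ the convex combination lies in $D(\Phi)$, and integrating yields \eqref{eq: def_convex}.

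\emph{Lower semicontinuity (main step).} This is the only step needing care, because of the extended-value definition of $\Phi$. Let $\xi_k\to\xi$ in $\mathcal X$, so $y_k\to y$ in $L^2$.
\begin{enumerate}
\item Pass to a subsequence along which $\Phi(\xi_k)\to\liminf_k\Phi(\xi_k)$. If this limit is $+\infty$ there is nothing to prove.
\item Pass to a further subsequence with $y_k\to y$ a.e. on $(0,T)$.
\item By continuity of $K$, the functions $g_k:=K(y_k)-\ell(y_k)\ge 0$ converge a.e. to $g:=K(y)-\ell(y)\ge 0$.
\item Fatou's lemma gives $\int_0^T g\,dt\le\liminf_k\int_0^T g_k\,dt$. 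Since $\int\ell(y_k)\to\int\ell(y)$, this becomes
\[
\int_0^T\bigl(K(y)-\ell(y)\bigr)\,dt\ \le\ \liminf_k\Phi(\xi_k)-\int_0^T\ell(y)\,dt<\infty .
\]
\item Because $\ell(y)\in L^1$, this bound shows $K(y)\in L^1$, so $\xi\in D(\Phi)$ and $\Phi(\xi)\le\liminf_k\Phi(\xi_k)$.
\end{enumerate}
This holds for the chosen subsequence, hence for the original liminf. So $\Phi$ is (strongly) lower semicontinuous on $\mathcal X$.

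\emph{Conclusion.} With $\Phi$ proper, convex and lower semicontinuous, Theorem \ref{thm: Theorem 2.43} applies on the Hilbert space $\mathcal X$ and yields that $\partial\Phi$ is maximal monotone. The $p$-component plays no role: $\partial\Phi(\xi)\subset\{(\cdot,\cdot,0)\}$.
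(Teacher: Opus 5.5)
Your proof is correct and follows the same route as the paper. The paper gets properness from $(0,0,0)\in D(J)$ and Assumption~\ref{ass:K_in_L1}, convexity from pointwise convexity of $K$, and lower semicontinuity of the integral functional via an integrable affine minorant, and then applies Theorem~\ref{thm: Theorem 2.43}. The only difference is that you prove the convexity and lower-semicontinuity steps by hand, using the gradient-inequality minorant at $0$ and Fatou's lemma. The paper instead cites \cite[p.~439]{Rockafellar1971} and \cite[Proposition 2.53]{BarbuPrecupanu2012}, with the $t$-dependent minorant built from $K(\bar x(t),0)$ and $\nabla K(\bar x(t),0)$.
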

\begin{proof}
A proof is provided in Appendix \ref{App: proof_Proposition2}.
\end{proof}
\begin{ass}\label{ass: K Global Lipschitz}
    $\nabla K$ is globally Lipschitz, i.e., there exists a constant $L_K>0$ such that
\[
\|\nabla K(x_1,u_1)-\nabla K(x_2,u_2)\|
\le
L_K\,\|(x_1,u_1)-(x_2,u_2)\|
\]
for all $(x_1,u_1),(x_2,u_2)\in\mathbb R^n\times\mathbb R^m$.
\end{ass}
\begin{obs}
The global Lipschitz continuity of $\nabla K$ ensures that
the operator
%\[
$(\widetilde x,u) \\ \longmapsto
\bigl(K_x(\widetilde x+\bar x,u),
      K_u(\widetilde x+\bar x,u)\bigr)$,
%\]
understood pointwise a.e. on $(0,T)$, maps
$L^2(0,T;\mathbb R^n)\times L^2(0,T;\mathbb R^m)$
into itself and is globally Lipschitz with constant $L_K$.
So, it is strongly continuous (and hence
demicontinuous; see
Subsection \ref{sect_FuncAnPrel}).
%These properties will be used in the subsequent perturbation arguments. 
Moreover 
%under Assumption \ref{ass: K Global Lipschitz}, $\Phi$ is finite-valued on $\mathcal X$ and 
%Indeed, global Lipschitz continuity of $\nabla K$ gives
\[
|K(x,u)|
\le
|K(0,0)|
+\|\nabla K(0,0)\|\,\|(x,u)\|
+\frac{L_K}{2}\|(x,u)\|^2.
\]
For every $(\widetilde x,u,p)\in \mathcal X$, one has
$(\widetilde x+\bar x,u)\in L^2(0,T;\mathbb R^{n+m})$.
Since $T<\infty$, the above estimate implies 
$K(\widetilde x+\bar x,u)\in L^1(0,T)$.
So, $\Phi$ is finite-valued on $\mathcal X$, consequently, $D(\Phi)=\mathcal X$. Since $\Phi$ is proper, convex, and lower semicontinuous,
\cite[Corollary 2.38, equation (2.37), p.85]{BarbuPrecupanu2012} gives
$X=\operatorname{int}D(\Phi)
\subseteq D(\partial\Phi)
\subseteq D(\Phi)=\mathcal X$. 
Consequently, under
Assumption \ref{ass: K Global Lipschitz}, we have
\begin{equation}\label{eq:DomainX}
D(\partial\Phi)=D(\Phi)=\mathcal X.  
\end{equation} 
\end{obs}
The next example shows that Assumptions \ref{ass:K_in_L1},
\ref{ass: K C1}, and \ref{ass:existence_optimizer}
alone do not provide a valid definition of the
formal optimizer dynamics on $D(J)$ (as in \eqref{eqn:J_Domain}) with values in $\mathcal X$ (as in \eqref{eqn:X_Definition}),  
following the formal framework in \cite[(17)-(19)]{vanderSchaft2026PDG}.  
This also applies to the shifted representation
\eqref{eq:shifted_pH_system} corresponding to
\cite[(23)]{vanderSchaft2026PDG}. 
%Assumption \ref{ass: K Global Lipschitz} is sufficient for well posedness. 
\begin{exmp}\label{ex:Counterexample} 
Take $T=1$, $n=m=1$, $A=-1$, $B=1$, $x_0=0$, and
$\bar x\equiv0$. 
Consider 
\begin{equation}\label{eq:explicit_smooth_counterexample_K}
\begin{aligned}
K(x,u)={}(1+x)^6+(1+x)^2(1+u^2)^{2/3}
         +u^2-8x-2
\end{aligned}
\end{equation}
%The mixed term in $K$ has at most $|u|^{4/3}$ grows for a fixed $x$. This permits integrability for bounded states and controls in $L^2$.  
For this $K$ Assumption \ref{ass: K Global Lipschitz} does not hold since $K_{xx}(x,0)=30(1+x)^4+2$ is unbounded. 
One can immediately verify that, for $K$ in \eqref{eq:explicit_smooth_counterexample_K}, Assumptions \ref{ass:K_in_L1},  \ref{ass: K C1} hold, and  Assumption  \ref{ass:existence_optimizer} is  satisfied with the optimal solution $
(x_\star,u_\star)=(0,0)
\in \mathcal F$, $p_\star=0$ and $\nabla K(0,0)=0$. We show that $K_x(x(\cdot),u(\cdot))$ may not belong to $L^2(0,1)$. Indeed, take $x^\dagger(t)=\frac{5}{3}t^{3/5}$, $u^\dagger(t)=t^{-2/5}+\frac53t^{3/5}$ and $p^\dagger(t)=0$. Then we have $x^\dagger\in H^1(0,1)$,
$x^\dagger(0)=x_0=0$, $u^\dagger\in L^2(0,1)$, and
%\[
$\dot x^\dagger=t^{-2/5}=-x^\dagger+u^\dagger$,
%\qquad 
$x^\dagger(0)=p^\dagger(1)=0$.
%\]
Note that the corresponding functional in \eqref{eqn: Phi_Definition} has its domain $D(\Phi)=L^6(0,1)\times L^2(0,1)\times L^2(0,1)$.
We have $\xi^\dagger:=(x^\dagger,u^\dagger,p^\dagger)
\in D(J)\cap D(\Phi)$, $J\xi^\dagger=0$, 
$\Phi(\xi^\dagger)<\infty$, and 
$K_u(x^\dagger,u^\dagger)\in L^2(0,1)$. 
%for every $(x,u,p)\in D(\Phi)$. 
Then, as $t$ decreases to zero,     
$\displaystyle 
\Big( K_x(x^\dagger (t),u^\dagger (t))\Big)^2 \sim 4t^{-\frac{16}{15}}$ that gives 
$K_x(x^\dagger,u^\dagger)\notin L^2(0,1)$. Recall that 
$\partial\Phi(x,u,p)
=\bigl\{(K_x(x,u),K_u(x,u),0)\bigr\}$
whenever both derivatives are in $L^2(0,1)$, and is
empty otherwise. Since $K_x(x^\dagger,u^\dagger)\notin L^2(0,1)$, then 
$\xi^\dagger\notin D(\partial\Phi)$ and 
\[
D(J)\cap D(\partial\Phi)
\subsetneq D(J)\cap D(\Phi)=D(J).
\]
\end{exmp}

% \begin{obs}
%     Example \ref{ex:Counterexample} clarifies the usefulness of Assumption \ref{ass: K Global Lipschitz} (it gives at most linear growth of $\nabla K$ which implies that the gradient vector belongs to $\mathcal X$ for every $(x,u,p)\in \mathcal X$). It  shows that Assumptions \ref{ass:K_in_L1}, \ref{ass: K C1}, and \ref{ass:existence_optimizer} alone do not ensure that the formal grdient vector $\bigl\{(K_x(x,u),K_u(x,u),0)\bigr\}$ is $\mathcal X$-valued on the whole $D(J)$, where, $\mathcal X$ and $D(J)$ are specified by \eqref{eqn:X_Definition} and \eqref{eqn:J_Domain}, respectively. 
% \end{obs}

\smallskip

In the rest of the paper, unless explicitly
stated otherwise, we consider that Assumptions 
\ref{ass:K_in_L1}-\ref{ass: K Global Lipschitz} are satisfied. Then, $\partial\Phi(\xi)$ is nonempty, is a singleton
for every $\xi\in\mathcal X$, and we identify it with its
unique element in $\mathcal X$. Thus
%, consistently with \eqref{eq:DomainX}, 
$\partial\Phi:\mathcal X\to\mathcal X$
is the single-valued operator
\begin{equation}\label{eq:subgradient_singleton_cont}
\partial\Phi(\xi)=
\begin{pmatrix}
K_x(\widetilde x+\bar x,u)\\
K_u(\widetilde x+\bar x,u)\\
0
\end{pmatrix},
\qquad \xi=(\widetilde x,u,p)\in\mathcal X,
\end{equation}
where the componentwise identities hold for almost every
$t\in(0,T)$. By Assumption \ref{ass: K Global Lipschitz}, the
identification \eqref{eq:subgradient_singleton_cont},
and using the Lipschitz estimate of $K$ from Assumption \ref{ass: K Global Lipschitz}, we obtain the first inequality below; therefore, $\partial\Phi$ is Lipschitz.  
\begin{align}
\|\partial\Phi(\xi)-\partial\Phi(\zeta)\|_\mathcal X
&\leq L_K\|\xi-\zeta\|_\mathcal X,
\label{eq:subgradient_lipschitz_cont}\\
\langle\partial\Phi(\xi)-\partial\Phi(\zeta),\xi-\zeta\rangle_\mathcal X
&\geq0,\qquad \xi,\zeta\in \mathcal X.
\label{eq:subgradient_monotonicity_cont}
\end{align}
The second inequality follows from the monotonicity of $\partial\Phi$ established in Proposition \ref{prn:Phi_max_monotone}. We also have the following result. 
\begin{prn}\label{prn:minJplusMaxMon}
    %Suppose that Assumptions 
    %\ref{ass:K_in_L1}-\ref{ass: K Global Lipschitz} are satisfied. Then $-J+\partial\Phi$ is a maximal monotone operator and its domain is $D(J)$. 
    The operator $-J+\partial\Phi$ is maximal monotone on $\mathcal X$, and its domain is $D(J)$. 
\end{prn}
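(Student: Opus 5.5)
The domain claim is immediate. Since $D(\partial\Phi)=\mathcal X$ by \eqref{eq:DomainX}, we get $D(-J+\partial\Phi)=D(J)\cap D(\partial\Phi)=D(J)$. The operator is single-valued on $D(J)$, because $\partial\Phi$ is the single-valued map \eqref{eq:subgradient_singleton_cont}.

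Monotonicity is also direct. For $\xi,\zeta\in D(J)$ we have
\[
\langle(-J+\partial\Phi)\xi-(-J+\partial\Phi)\zeta,\xi-\zeta\rangle_{\mathcal X}
=\langle -J(\xi-\zeta),\xi-\zeta\rangle_{\mathcal X}+\langle\partial\Phi(\xi)-\partial\Phi(\zeta),\xi-\zeta\rangle_{\mathcal X}.
\]
The first term vanishes because $J$ is skew-adjoint (Proposition \ref{prn:-J_max_monotone}), so $\langle J\eta,\eta\rangle_{\mathcal X}=0$ for $\eta\in D(J)$. The second term is nonnegative by \eqref{eq:subgradient_monotonicity_cont}.

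For maximality, the plan is to invoke Theorem \ref{thm: Theorem 1.141} in the Hilbert space $\mathcal X$, where it reduces to the range condition $R(-J+\partial\Phi+\lambda I)=\mathcal X$ for some $\lambda>0$. We obtain this condition from Lemma \ref{lem: Corollary 1.140} with $T:=-J$, which is maximal monotone by Proposition \ref{prn:-J_max_monotone}, and $M:=\partial\Phi+\lambda I$. We then check the hypotheses on $M$ one by one.
\begin{itemize}
\item $M$ is defined on all of $\mathcal X$, by \eqref{eq:DomainX}.
\item $M$ is monotone, by \eqref{eq:subgradient_monotonicity_cont} together with the monotonicity of $\lambda I$.
\item $M$ is Lipschitz with constant $L_K+\lambda$, by \eqref{eq:subgradient_lipschitz_cont}. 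Hence it is strongly continuous and therefore demicontinuous.
\item $M$ is bounded, since $\|M\xi\|\le\|M0\|+(L_K+\lambda)\|\xi\|$.
\item $M$ is coercive: by monotonicity, $\langle M\xi,\xi\rangle\ge\langle \partial\Phi(0),\xi\rangle+\lambda\|\xi\|^2\ge\lambda\|\xi\|^2-\|\partial\Phi(0)\|\,\|\xi\|$, so $\langle M\xi,\xi\rangle/\|\xi\|\to+\infty$.
\end{itemize}
Lemma \ref{lem: Corollary 1.140} then gives $R(-J+\partial\Phi+\lambda I)=\mathcal X$, and Theorem \ref{thm: Theorem 1.141} yields maximal monotonicity of $-J+\partial\Phi$.

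The only step needing care is to make sure the operator sum in Lemma \ref{lem: Corollary 1.140} is understood with domain $D(T)=D(J)$, so that its range statement matches our operator. It also requires $\partial\Phi(0)$ to be a well-defined element of $\mathcal X$. This holds because $\partial\Phi(0)=(K_x(\bar x,0),K_u(\bar x,0),0)$, which lies in $L^2$ by Assumption \ref{ass: K Global Lipschitz} and $\bar x\in H^1$. Everything else is routine.
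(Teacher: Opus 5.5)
Your proof is correct, but it takes a different route from the paper. The paper gets maximality in one step from the Rockafellar sum theorem \cite[Theorem 2.62, p.~101]{BarbuPrecupanu2012}. It uses three facts: $-J$ is maximal monotone, $\Phi$ is proper, convex and lower semicontinuous, and $D(J)\cap\operatorname{int}D(\Phi)=D(J)\neq\varnothing$ because $D(\Phi)=\mathcal X$. The domain identity is then read off exactly as you do.

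You instead check monotonicity by hand. You get the range condition $R(-J+\partial\Phi+\lambda I)=\mathcal X$ from Lemma~\ref{lem: Corollary 1.140} with $T=-J$ and $M=\partial\Phi+\lambda I$, and conclude with Theorem~\ref{thm: Theorem 1.141}. Your checks of demicontinuity, boundedness and coercivity (via $\langle\partial\Phi(\xi),\xi\rangle\ge\langle\partial\Phi(0),\xi\rangle$) are all sound.

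The paper's route is shorter. It needs only the convex structure of $\Phi$ and the interior condition, so it would still work if $\partial\Phi$ were multivalued, e.g.\ for a nonsmooth convex $K$ with $D(\Phi)=\mathcal X$.

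Your route is more elementary and self-contained. It uses only the two tools recalled in Subsection~\ref{sect_FuncAnPrel}, which the paper states but never invokes at this point. It relies on $\partial\Phi$ being single-valued, everywhere defined, monotone and Lipschitz, not on its being a subdifferential. In effect you reprove the standard fact that a maximal monotone operator plus an everywhere-defined continuous monotone map is maximal monotone. That argument would apply equally to monotone Lipschitz perturbations that are not gradients.
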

\begin{proof}
Assumption \ref{ass: K Global Lipschitz} holds, therefore we have \eqref{eq:DomainX}. 
Because $-J$ is maximal monotone (Proposition \ref{prn:-J_max_monotone}), $\Phi$ a lower semicontinuous proper
and convex function (Proposition \ref{prn:Phi_max_monotone}), and 
$D(-J)\cap \operatorname{int}D(\Phi)
= D(J)\neq\varnothing$, \cite[Theorem 2.62, p. 101]{BarbuPrecupanu2012} gives that 
$-J+\partial\Phi$ is maximal monotone (see also \cite[Theorem 1, p. 76]{Rockafellar1970Sums}). Then $D(-J+\partial\Phi)=D(-J)\cap D(\partial\Phi)=D(J)\cap \mathcal X=D(J)$. 
\end{proof}
\subsection{The weighted evolution equation}
\label{sect:weighted_evolution}
As in \eqref{eq:PDGCT_variational}, we distinguish physical time $t\in[0,T]$ from algorithmic time $\tau\geq 0$. Dots denote physical-time derivatives; primes denote algorithmic-time derivatives. 
Recall the homogenization $x=\widetilde x+\bar x$ 
introduced before in \eqref{eqn:J_Definition}, where the fixed lift $\bar x$ satisfies $\bar x\in H^1(0,T)$ and $\bar x(0)=x_0$. Thus
%\[
$\xi(\tau)=(\widetilde x(\cdot,\tau),u(\cdot,\tau),p(\cdot,\tau))$,
\qquad 
$x=\widetilde x+\bar x$,
%\] 
%
and applied to the third component of \eqref{eq:PDGCT_explicit} gives  
\begin{equation}\label{eq:homogenization_residual_cont}
-\dot x+Ax+Bu
=-\dot{\widetilde x}+A\widetilde x+Bu 
+A\bar x-\dot{\bar x}.
\end{equation}
Thus, the homogeneous-domain realization has constant forcing
\begin{equation}\label{eq:lift_forcing_cont}
h_{\bar x}:=(0,0,A\bar x-\dot{\bar x})\in \mathcal X.
\end{equation}
There is no algorithmic-time derivative of the lift since $\bar x$ is independent of $\tau$. The domain \eqref{eqn:J_Domain} already incorporates the
homogeneous boundary condition $\widetilde x(0)=0$, while the functional \eqref{eqn: Phi_Definition} evaluates the cost at the physical state $x=\widetilde x+\bar x$. One may choose
$\bar x(t)=e^{At}x_0$, in which case $h_{\bar x}=0$. The following results hold for every fixed lift
$\bar x\in H^1(0,T;\mathbb R^n)$ satisfying $\bar x(0)=x_0$, with the corresponding forcing $h_{\bar x}$ retained in the evolution equation. 

The positive definite weighting matrices $X,U,P$, introduced after \eqref{eq:PDGCT_variational}, are taken to be symmetric positive
definite, consistently with the Hamiltonian formulation in
\cite[(16), (21)]{vanderSchaft2026PDGJuly}. They induce
\begin{equation}
\label{eq:W_inner_product}
W:=\operatorname{diag}(X,U,P),\qquad
\langle\xi,\eta\rangle_W:=\langle W\xi,\eta\rangle_\mathcal X.
\end{equation}
% {\rev Ups: we have $X$ space and $X$ weighting matrix introduced
% after \eqref{eq:PDGCT_variational}! To change the name of the weighting matrix?}
%
Let $\mathcal X_W$ be $\mathcal X$ endowed with the inner product \eqref{eq:W_inner_product}. We have
\begin{equation}\label{eq:weighted_norm_bounds_cont}
w_-\|\xi\|_{\mathcal X}^2\leq\|\xi\|_W^2
\leq w_+\|\xi\|_{\mathcal X}^2,\qquad \xi\in\mathcal X,
\end{equation}
where  $w_-:=\lambda_{\min}(W)>0$, $w_+:=\lambda_{\max}(W)$. Using \eqref{eq:homogenization_residual_cont} and
\eqref{eq:subgradient_singleton_cont}, the formal equations
\eqref{eq:PDGCT_explicit} become
\begin{equation}\label{eq:weighted_optimizer_cont}
W\frac{d\xi}{d\tau}=J\xi-\partial\Phi(\xi)+h_{\bar x}.
\end{equation}
Define 
\begin{equation}\label{eq:weighted_operator_cont}
\begin{aligned}
\mathcal A_W\xi
:=W^{-1}\bigl(-J\xi+\partial\Phi(\xi)-h_{\bar x}\bigr),
\end{aligned}
\end{equation}
with $D(\mathcal A_W):=D(J)$. Then \eqref{eq:weighted_optimizer_cont} can be written as
%\[
$\displaystyle 
\frac{d\xi}{d\tau}+\mathcal A_W\xi=0$.
%\]
For $W=I$ and $\bar x(t)=e^{At}x_0$, this is the operator
$-J+\partial\Phi$ in Proposition \ref{prn:minJplusMaxMon}. Note that skew-adjointness of $W^{-1}J$ in $\mathcal X_W$, with domain $D(J)$, follows from Proposition \ref{prn:-J_max_monotone}, while Theorem \ref{thm: Theorem 2.43}, applied on $\mathcal X_W$, gives maximal monotonicity of the weighted subdifferential. Hence we have the following corollary of
Proposition \ref{prn:minJplusMaxMon}. 
%Therefore one can state the next result. %that includes the weights and the affine forcing for an arbitrary lift. 
\begin{cor}\label{prn:weighted_maximal_cont}
The operator $\mathcal A_W$ is maximal monotone
on $\mathcal X_W$, and its domain is dense in $\mathcal X_W$.
\end{cor}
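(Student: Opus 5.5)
The plan is to work directly in $\mathcal X_W$ and split $\mathcal A_W$ into three pieces:
\[
\mathcal A_W\xi=-W^{-1}J\xi+W^{-1}\partial\Phi(\xi)-W^{-1}h_{\bar x}.
\]
The last term is constant and plays no role in monotonicity.

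\textbf{Skew part.} Since $W$ is constant, symmetric and acts pointwise in $t$, it preserves $D(J)$. For $\xi,\eta\in D(J)$ we have
\[
\langle -W^{-1}J\xi,\eta\rangle_W=-\langle J\xi,\eta\rangle_{\mathcal X}.
\]
By Proposition \ref{prn:-J_max_monotone} this equals $\langle \xi,J\eta\rangle_{\mathcal X}=\langle \xi,W^{-1}J\eta\rangle_W$. So $-W^{-1}J$ is skew-symmetric on $\mathcal X_W$. To get skew-adjointness (not just skew-symmetry), I would compute the adjoint directly: $\eta\in D((W^{-1}J)^{*_W})$ iff $\xi\mapsto\langle J\xi,\eta\rangle_{\mathcal X}$ is $\mathcal X$-bounded on $D(J)$, i.e. iff $\eta\in D(J^*)=D(J)$. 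Hence $-W^{-1}J$ is skew-adjoint, and in particular maximal monotone on $\mathcal X_W$, with $\langle -W^{-1}J\xi,\xi\rangle_W=0$.

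\textbf{Gradient part.} Define the convex functional $\Phi$ on $\mathcal X_W$ (same underlying vector space and topology, with an equivalent norm by \eqref{eq:weighted_norm_bounds_cont}). It stays proper, convex and lower semicontinuous, so Theorem \ref{thm: Theorem 2.43} gives that its $W$-subdifferential is maximal monotone. From Definition \ref{deff: Definition 2.30}, $\zeta\in\partial_W\Phi(\xi)$ iff $W\zeta\in\partial\Phi(\xi)$. Thus $\partial_W\Phi=W^{-1}\partial\Phi$, which is single-valued and everywhere defined by \eqref{eq:DomainX}. By \eqref{eq:subgradient_lipschitz_cont} it is also Lipschitz on $\mathcal X_W$, with constant $L_K\sqrt{w_+/w_-}/w_-$ or similar.

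\textbf{Sum and range condition.} Maximal monotonicity of the sum follows as in Proposition \ref{prn:minJplusMaxMon}, invoking \cite[Theorem 2.62]{BarbuPrecupanu2012} on $\mathcal X_W$, since $D(-W^{-1}J)\cap\operatorname{int}D(\Phi)=D(J)\neq\varnothing$. Alternatively, and more elementarily, one can use that a maximal monotone operator plus an everywhere defined Lipschitz monotone operator is maximal monotone. Adding the constant $-W^{-1}h_{\bar x}$ preserves both monotonicity and the range condition $R(\mathcal A_W+\lambda I)=\mathcal X_W$ of Theorem \ref{thm: Theorem 1.141}, since the constant only shifts the range. So $\mathcal A_W$ is maximal monotone with $D(\mathcal A_W)=D(J)$.

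\textbf{Density of the domain.} $D(J)$ contains $C_c^\infty(0,T;\mathbb R^n)\times L^2(0,T;\mathbb R^m)\times C_c^\infty(0,T;\mathbb R^n)$, which is dense in $\mathcal X$. Density in $\mathcal X$ transfers to $\mathcal X_W$ by the norm equivalence \eqref{eq:weighted_norm_bounds_cont}. This also follows abstractly, because skew-adjoint operators are densely defined.

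The main obstacle I expect is the identification of the weighted adjoint, i.e. checking that skew-adjointness survives the change of inner product. This requires that $W$ commutes with the boundary conditions. Because $W$ is a constant invertible block-diagonal matrix, multiplication by $W$ maps $H^1$ functions with $\widetilde x(0)=0$, $p(T)=0$ to functions of the same type, so $W D(J)=D(J)$. I would verify this first.
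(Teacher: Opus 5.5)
Your proposal is correct and follows the same route as the paper: skew-adjointness of $W^{-1}J$ on $\mathcal X_W$ from Proposition~\ref{prn:-J_max_monotone}, maximal monotonicity of the weighted subdifferential via Theorem~\ref{thm: Theorem 2.43} on $\mathcal X_W$, the sum theorem as in Proposition~\ref{prn:minJplusMaxMon}, and density inherited from $D(J)$; you also make explicit the identification $\partial_W\Phi=W^{-1}\partial\Phi$ and the harmless constant shift by $W^{-1}h_{\bar x}$, which the paper leaves implicit. The obstacle you anticipate, $WD(J)=D(J)$, is true but not needed, since $\langle W^{-1}J\xi,\eta\rangle_W=\langle J\xi,\eta\rangle_{\mathcal X}$ already reduces the weighted adjoint to $J^*$.
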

% \begin{proof} \\
% %{\rev To be detailed later if needed.}
% \end{proof}
%
\subsection{Global solutions in algorithmic time}
\label{sect:global_solutions_cont}
We call $\xi$ a \textit{strong solution} if it is a locally absolutely
continuous $\mathcal X_W$-valued function such that, for a.e. $\tau \geq 0$, 
$\xi(\tau)\in D(J)$ and it satisfies \eqref{eq:weighted_optimizer_cont} in $\mathcal X_W$. A \textit{mild solution} with initial value $\xi_0\in\mathcal X$
is a limit of strong solutions $\xi_n$, with
$\xi_n(0)\in D(J)$ and $\xi_n(0)\to\xi_0$ in $\mathcal X_W$,
where $\xi_n\to\xi$ uniformly in $\mathcal X_W$ on every
bounded interval of algorithmic time.  
The following result establishes global well-posedness and specifies the regularity for initial data in $D(J)$. 

\begin{thm}\label{thm:global_wellposed_cont}
    For every $\xi_0\in\mathcal X$, equation
    \eqref{eq:weighted_optimizer_cont} has a unique global mild solution $\xi\in C([0,\infty);\mathcal X_W)$ with $\xi(0)=\xi_0$, and 
%\[
$S_W(\tau)\xi_0:=\xi(\tau;\xi_0)$,
%\qquad 
$\tau\geq 0$,
%\]
form a \textit{continuous semigroup of contractions} on $\mathcal X_W$ 
(see \cite[Definition 4.4, p. 154]{Barbu2010}).
Moreover
\begin{equation}\label{eq:contraction_cont}
	\|S_W(\tau)\xi_0-S_W(\tau)\eta_0\|_W
	\leq\|\xi_0-\eta_0\|_W
\end{equation}
for all $\xi_0,\eta_0\in\mathcal X$
and $\tau\geq 0$. The resolvent formula
\begin{equation}\label{eq:resolvent_limit_cont}
	S_W(\tau)\xi_0
	=
	\lim_{N\to\infty}
	\left(I+\frac{\tau}{N}\mathcal A_W\right)^{-N}\xi_0
\end{equation}
holds in $\mathcal X_W$, uniformly for $\tau\in[0,b]$
for every $b>0$ and every fixed $\xi_0\in\mathcal X$. If $\xi_0\in D(J)$, then the solution is strong and $\xi\in W^{1,\infty}(0,b;\mathcal X_W)$ for $b>0$, $\xi(\tau)\in D(J)$, %for every
$\tau\geq 0$, and it satisfies
\begin{equation}\label{eq:bound}
    \|\mathcal A_W\xi(\tau)\|_W
	\leq\|\mathcal A_W\xi_0\|_W
	\text{ for every }\tau\geq 0.
\end{equation}
The physical-time traces
$\widetilde x(0,\tau)=0$ and $p(T,\tau)=0$
are defined and hold for every $\tau\geq0$.
\end{thm}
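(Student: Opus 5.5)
Since Corollary \ref{prn:weighted_maximal_cont} already gives that $\mathcal A_W$ is maximal monotone on the Hilbert space $\mathcal X_W$ with $\overline{D(\mathcal A_W)}=\mathcal X_W$, the plan is to apply the Crandall--Liggett/Kōmura generation theory for nonlinear maximal monotone operators in Hilbert spaces (e.g.\ \cite[Theorem 4.2 and Theorem 4.3, Ch.~4]{Barbu2010}). The first step is the range condition: $R(I+\lambda\mathcal A_W)=\mathcal X_W$ for all $\lambda>0$, which is Theorem \ref{thm: Theorem 1.141} in the Hilbert case with $F=I$. Maximal monotonicity also gives that the resolvents $J_\lambda:=(I+\lambda\mathcal A_W)^{-1}$ are single-valued nonexpansive maps on all of $\mathcal X_W$. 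The Crandall--Liggett exponential formula then yields the limit \eqref{eq:resolvent_limit_cont}, uniformly on compact $\tau$-intervals. It defines a family $S_W(\tau)$ on $\overline{D(\mathcal A_W)}=\mathcal X_W$ satisfying $S_W(0)=I$, $S_W(\tau+s)=S_W(\tau)S_W(s)$, and strong continuity in $\tau$. Nonexpansiveness of each $J_{\tau/N}$ passes to the limit, which gives \eqref{eq:contraction_cont}.

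For $\xi_0\in D(J)=D(\mathcal A_W)$ I will invoke the Kōmura theorem in Hilbert space. It gives that $\xi(\tau)=S_W(\tau)\xi_0$ is Lipschitz on $[0,b]$, so $\xi\in W^{1,\infty}(0,b;\mathcal X_W)$. It also gives $\xi(\tau)\in D(\mathcal A_W)$ for every $\tau\ge0$, and $\xi'+\mathcal A_W\xi=0$ a.e.; the right derivative exists everywhere and equals $-\mathcal A_W\xi(\tau)$, since $\mathcal A_W$ is single-valued and thus coincides with its minimal section. The bound \eqref{eq:bound} follows from the standard monotonicity of $\tau\mapsto\|\mathcal A^0_W\xi(\tau)\|_W$. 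A direct argument is also available: for $h>0$, contraction gives $\|\xi(\tau+h)-\xi(\tau)\|_W\le\|\xi(h)-\xi_0\|_W$. Dividing by $h$ and letting $h\downarrow0$ gives $\|\xi'(\tau^+)\|_W\le\|\xi'(0^+)\|_W=\|\mathcal A_W\xi_0\|_W$. Multiplying \eqref{eq:weighted_optimizer_cont} by $W$ recovers the strong-solution form, so $\xi$ is a strong solution in the sense defined above. Uniqueness of strong solutions follows from monotonicity: for two solutions, $\frac{d}{d\tau}\tfrac12\|\xi-\eta\|_W^2=-\langle\mathcal A_W\xi-\mathcal A_W\eta,\xi-\eta\rangle_W\le0$.

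For general $\xi_0\in\mathcal X$, I will approximate by $\xi_{0,n}\in D(J)$ with $\xi_{0,n}\to\xi_0$, using density of $D(J)$. By \eqref{eq:contraction_cont}, $\sup_{\tau\ge0}\|S_W(\tau)\xi_{0,n}-S_W(\tau)\xi_0\|_W\le\|\xi_{0,n}-\xi_0\|_W\to0$, so $S_W(\cdot)\xi_0$ is a mild solution in the defined sense. Uniqueness of the mild solution holds because any other approximating sequence of strong solutions is Cauchy-close to this one by the same contraction estimate, so the limit is independent of the sequence. Global existence is automatic since the estimates hold on every $[0,b]$. Finally, the trace statement is immediate: $\xi(\tau)\in D(J)$ for all $\tau$ in the strong case, and the definition \eqref{eqn:J_Domain} of $D(J)$ includes $\widetilde x(0,\tau)=0$ and $p(T,\tau)=0$ for $H^1$ functions, whose traces are well defined.

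The main obstacle is the bookkeeping of the weighted setting. Maximal monotonicity of $\mathcal A_W$ must be in the $W$-inner product, and the constant forcing $h_{\bar x}$ must not spoil monotonicity; the latter is harmless because a constant shift preserves monotonicity and maximality. Some care is also needed in the trace statement for merely mild solutions: there the traces are meaningful only when $\xi(\tau)\in D(J)$. I would state the trace part for strong solutions, noting that it is the $D(J)$ regularity, not continuity in $\mathcal X_W$, that makes the traces defined.
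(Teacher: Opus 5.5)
Your proposal is correct and follows essentially the same route as the paper: both apply the nonlinear semigroup theory of \cite{Barbu2010} to the densely defined maximal monotone operator $\mathcal A_W$ on $\mathcal X_W$, namely the exponential formula, K\=omura-type regularity and domain invariance for $\xi_0\in D(J)$, and density plus contraction for general initial data. The differences are minor: you obtain \eqref{eq:contraction_cont} from nonexpansiveness of the resolvents rather than from the energy estimate based on skew-symmetry of $J$ and monotonicity of $\partial\Phi$, you give an explicit argument for \eqref{eq:bound} that the paper leaves implicit, and your caveat that the trace claim needs $\xi(\tau)\in D(J)$ matches how the paper's proof actually uses it.
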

\begin{proof}
By Proposition \ref{prn:weighted_maximal_cont},
$\mathcal A_W$ is maximal monotone on $\mathcal X_W$, with $D(\mathcal A_W)=D(J)$ and $\overline{D(\mathcal A_W)}=\mathcal X_W$. We apply the results to $	\xi'+\mathcal A_W\xi=0.$ 
The term $h_{\bar x}$ is already included in
$\mathcal A_W$. 
Existence and uniqueness of mild solutions, together with \eqref{eq:resolvent_limit_cont}, follow from \cite[Corollary 4.1 and Theorem  4.3,
pp. 131-132]{Barbu2010}. These results apply on every bounded interval of algorithmic time. Uniqueness makes the solutions consistent on overlapping intervals, yielding a global solution. For $\xi_0\in D(\mathcal A_W)$, \cite[Theorem 4.5, p. 141]{Barbu2010},
applied with forcing $f=0$ and $\omega =0$, gives
$\xi\in W^{1,\infty}(0,b;\mathcal X_W)$
for every $b>0$. 
For two strong solutions $\xi$ and $\eta$, the
differentiation rule 
\cite[Lemma 4.1, p. 128]{Barbu2010}, 
skew-symmetry of $J$, and
\eqref{eq:subgradient_monotonicity_cont} yield
\[
\begin{aligned}
	\frac{1}{2}\frac{d}{d\tau}\|\xi-\eta\|_W^2
	=-\bigl\langle
	\partial\Phi(\xi)-\partial\Phi(\eta),
	\xi-\eta
	\bigr\rangle_{\mathcal X}
	\leq 0, \mbox{ a.e. } \tau.
\end{aligned}
\]
For $\xi_0,\eta_0\in D(J)$, subtracting the two evolution equations cancels the common affine forcing. Taking the corresponding mild-solution limits gives \eqref{eq:contraction_cont} for all initial data. 
Then using domain invariance and \eqref{eqn:J_Domain}, give \eqref{eq:contraction_cont} and \eqref{eq:bound}. Note that, for initial data $\xi_0\in D(J)$, domain invariance follows from
\cite[proof of Theorem 4.6, p. 145]{Barbu2010}. For each fixed $\tau$, the traces are evaluated using the absolutely continuous representatives of
$\widetilde x(\cdot,\tau),p(\cdot,\tau)
\in H^1(0,T;\mathbb R^n)$; see \cite[Theorem 1.17, pp. 23-24]{Barbu2010}. Finally, $S_W(0)=I$, and $\tau\mapsto S_W(\tau)\xi_0$ is continuous for every $\xi_0\in\mathcal X$.
Together with \eqref{eq:contraction_cont}, these properties establish that $S_W$ is a semigroup of contractions on $\mathcal X_W$.
\end{proof}

\section{Convergence}
We first identify 
%the stationary point of the semigroup.
the equilibria of the weighted optimizer evolution in algorithmic time 
in \eqref{eq:weighted_optimizer_cont} with the optimal solutions of \eqref{eq:ocp_cost}. 
%Then we distinguish Lyapunov stability from convergence. 
%
%\subsection{The optimal solution}
The next result follows from the minimum principle in \cite[Section 4.1.3, Theorem 4.5, pp. 243-244]
{BarbuPrecupanu2012} and the extremality conditions in \cite[Section 4.1.3]
{BarbuPrecupanu2012}, with the costate sign adjusted to match. 
\begin{prn}\label{prn:equilibrium_cont}
	There is exactly one equilibrium
	$\xi_\star=(x_\star-\bar x,u_\star,p_\star)\in D(J)$ of
	\eqref{eq:weighted_optimizer_cont}. 
    The pair $(x_\star,u_\star)$ is the unique optimal solution of \eqref{eq:ocp_cost}, and its costate is given by
    \begin{equation}\label{eq:costate_formula_cont}
	p_\star(t)=\int_t^T e^{A^\top(s-t)}
	K_x\bigl(x_\star(s),u_\star(s)\bigr)\,ds,
	\qquad t\in[0,T]. 
\end{equation}
In particular, $\{\xi\in D(\mathcal A_W):\mathcal A_W\xi=0\} =\{\xi_\star\}$.
\end{prn}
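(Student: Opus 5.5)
Equilibria of \eqref{eq:weighted_optimizer_cont} are exactly the zeros of $\mathcal A_W$. Since $W$ is invertible, $\xi=(\widetilde x,u,p)\in D(J)$ satisfies $\mathcal A_W\xi=0$ if and only if
\[
J\xi-\partial\Phi(\xi)+h_{\bar x}=0 .
\]
Writing $x=\widetilde x+\bar x$ and using \eqref{eqn:J_Definition}, \eqref{eq:subgradient_singleton_cont} and \eqref{eq:homogenization_residual_cont}, this is the system
\[
\dot p=-A^\top p-K_x(x,u),\qquad 0=B^\top p+K_u(x,u),\qquad \dot x=Ax+Bu,
\]
holding a.e. on $(0,T)$, with $x(0)=x_0$ and $p(T)=0$ encoded in $D(J)$. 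This is precisely the Pontryagin system \eqref{eq:PMP}. So the claim reduces to two facts: \eqref{eq:PMP} has exactly one solution in $H^1\times L^2\times H^1$, and that solution is the optimizer $(x_\star,u_\star)$ with costate \eqref{eq:costate_formula_cont}.

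\textbf{Sufficiency and uniqueness.} Let $(x,u,p)$ solve \eqref{eq:PMP} and take any $(y,w)\in\mathcal F$. By convexity and $C^1$ regularity of $K$,
\[
K(y,w)-K(x,u)\ge K_x(x,u)^\top(y-x)+K_u(x,u)^\top(w-u)\quad\text{pointwise.}
\]
Substitute $K_x=-\dot p-A^\top p$ and $K_u=-B^\top p$, and integrate over $(0,T)$. All terms are in $L^1$ by Assumption \ref{ass: K Global Lipschitz}. Integration by parts in $H^1$ is valid, the boundary terms vanish because $(y-x)(0)=0$ and $p(T)=0$, and $\dot y-\dot x=A(y-x)+B(w-u)$. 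Together these give $\mathcal J(y,w)\ge\mathcal J(x,u)$, so $(x,u)$ is optimal. Strict convexity of $\mathcal J$ on $\mathcal F$ then forces $(x,u)=(x_\star,u_\star)$, the unique optimizer. The costate is determined as well: $p$ solves the linear terminal value problem $\dot p=-A^\top p-K_x(x_\star,u_\star)$, $p(T)=0$, whose forcing lies in $L^2$. Variation of constants gives \eqref{eq:costate_formula_cont} and uniqueness. Hence there is at most one equilibrium, and it must have the form $\xi_\star=(x_\star-\bar x,u_\star,p_\star)$.

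\textbf{Existence of an equilibrium.} This is the main obstacle: it is the necessity direction, i.e. producing a multiplier for the optimizer from Assumption \ref{ass:existence_optimizer} without the piecewise-continuity hypothesis in the footnote. I would avoid Pontryagin and argue by hand.
\begin{itemize}
\item Define $p_\star$ by \eqref{eq:costate_formula_cont}. It lies in $H^1$ with $p_\star(T)=0$ and satisfies the adjoint equation, so only the stationarity $B^\top p_\star+K_u(x_\star,u_\star)=0$ remains.
\item Take an arbitrary direction $w\in L^2$ and let $y$ solve $\dot y=Ay+Bw$, $y(0)=0$. Then $(x_\star+sy,u_\star+sw)\in\mathcal F$ for all $s$.
\item The map $s\mapsto\mathcal J(x_\star+sy,u_\star+sw)$ is differentiable at $0$. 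This follows from dominated convergence using the Lipschitz bound on $\nabla K$. Optimality then gives
\[
\int_0^T \bigl(K_x^\top y+K_u^\top w\bigr)\,dt=0 .
\]
\item Insert $K_x=-\dot p_\star-A^\top p_\star$ and integrate by parts, using $y(0)=0$ and $p_\star(T)=0$. This turns the identity into $\int_0^T (K_u+B^\top p_\star)^\top w\,dt=0$.
\end{itemize}
Since $w\in L^2$ is arbitrary, $B^\top p_\star+K_u(x_\star,u_\star)=0$ a.e. (This is the content of the cited convex-analysis extremality result, \cite[Theorem 4.5]{BarbuPrecupanu2012}.)

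It follows that $\xi_\star\in D(J)$ and $\mathcal A_W\xi_\star=0$. Combined with the uniqueness step, $\{\xi\in D(\mathcal A_W):\mathcal A_W\xi=0\}=\{\xi_\star\}$.
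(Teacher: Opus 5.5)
Your proof is correct, but it takes a different route from the paper's.

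The paper does not argue by hand. It places the problem in the convex maximum principle of \cite[Section 4.1.3, Theorem 4.5]{BarbuPrecupanu2012}: state space $\mathbb R^n$, evolution $e^{A(t-s)}$, and endpoint cost $\ell$ equal to $0$ at $a=x_0$ and $+\infty$ otherwise. It asserts, without spelling out the check, that hypotheses (A)--(E) hold, and reads off both directions from that one theorem. In one direction, optimality of $(x_\star,u_\star)$ yields a dual arc $\widehat p$ with $\widehat p(T)=0$, $B^\top\widehat p=K_u$ and an integral representation; the sign flip $p_\star=-\widehat p$ then gives \eqref{eq:costate_formula_cont} and stationarity. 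In the other, an equilibrium is completed with auxiliary multipliers to an extremal pair. The theorem's assertion that extremal pairs are optimal, plus uniqueness of the optimizer and of the terminal-value adjoint solution, gives $\xi=\xi_\star$.

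You replace both uses of that theorem with elementary arguments:
\begin{itemize}
\item Sufficiency comes from the gradient inequality for the $C^1$ convex $K$ and one integration by parts.
\item Necessity comes from a two-sided first variation along the affine set $\mathcal F$, with differentiability under the integral justified by dominated convergence and the bound from Assumption \ref{ass: K Global Lipschitz}. You then integrate by parts against a $p_\star$ defined beforehand by variation of constants.
\end{itemize}

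Your version is self-contained. Nothing about an external theorem needs verifying, no sign conventions need matching, and it is visible why no constraint qualification is needed: the only constraint is linear and is parametrized by $w\mapsto y_w$. It also shows exactly where Assumption \ref{ass: K Global Lipschitz} enters. It puts $K_x,K_u$ in $L^2$, hence $p_\star\in H^1$ and the stationarity residual in $L^2$. It also licenses differentiation of $\mathcal J$ along feasible directions.

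The paper's route places the result in a general framework (nonsmooth convex Lagrangians, endpoint costs, state constraints) where the same citation would still apply. The cost is reliance on a hypothesis check that is not carried out in the text.
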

\begin{proof} A proof is provided in Appendix \ref{App: proof_Proposition4}.
\end{proof}

\subsection{Dissipation of the shifted Hamiltonian}
The shifted Hamiltonian functional and the associated
dissipation inequality are given in
\cite[(22)-(25)]
{vanderSchaft2026PDGJuly}.
We now establish the corresponding integral identity
for every mild solution of
\eqref{eq:weighted_optimizer_cont}. 
%in the Hilbert space realization developed above. 
Note that the energy variables $(z,v,r)$ introduced in \eqref{eq:Hamiltonian_functional} are related to the homogenized variable $\xi$ by
$(z,v,r)=W\xi+(X\bar x,0,0)$. 
After subtracting the equilibrium, this relation becomes
\begin{equation}
\label{eq:RelationHomVariables}
 (z-z_\star,v-v_\star,r-r_\star)=W(\xi-\xi_\star). 
\end{equation}
We therefore work directly with $\xi\in\mathcal X_W$ and $\xi_\star=(x_\star-\bar x,u_\star,p_\star)\in D(J)$, where $\xi_\star$ is the unique equilibrium given by
Proposition \ref{prn:equilibrium_cont}.  
Then the shifted Hamiltonian \eqref{eq:shifted_Hamiltonian}
in the homogenized coordinates
is %represented by
\begin{equation}
    \label{eq:energy_diss_cont}
    V_W(\xi):=\mathcal H_e\bigl(X(\widetilde x+\bar x),Uu,Pp\bigr)=\tfrac12\|\xi-\xi_\star\|_W^2.
\end{equation}
Define 
\begin{equation}
    \label{eq:DcalDiff}
    \mathscr D(\xi):=
\bigl\langle\partial\Phi(\xi)-\partial\Phi(\xi_\star),
\xi-\xi_\star\bigr\rangle_{\mathcal X}.
\end{equation}
Both functionals in \eqref{eq:energy_diss_cont} and \eqref{eq:DcalDiff} are defined
on the whole space $\mathcal X$, since $\partial\Phi$ is single-valued
and Lipschitz (see \eqref{eq:subgradient_singleton_cont}
and \eqref{eq:subgradient_lipschitz_cont}).
Using $x=\widetilde x+\bar x$ and the representation
\eqref{eq:subgradient_singleton_cont} we have 
\begin{equation}\label{eq:dissipation_physical_cont}
	\begin{aligned}
		\mathscr D(\xi)
		=\int_0^T\Big[
		&\bigl(K_x(x,u)-K_x(x_\star,u_\star)\bigr)^\top
		(x-x_\star)\\
		&+\bigl(K_u(x,u)-K_u(x_\star,u_\star)\bigr)^\top
		(u-u_\star)\Big]\,dt.
	\end{aligned}
\end{equation}
%with the functions inside the integral evaluated at physical time $t$. 
By Assumption \ref{ass: K C1}, $K$ is differentiable and strictly convex. Its gradient is therefore strictly
monotone
% : for $a,b\in\mathbb R^{n+m}$, $a\neq b$,
% %\[
% $\bigl(\nabla K(a)-\nabla K(b)\bigr)^\top(a-b)>0$ 
% %\]
(see e.g. \cite[Theorem 2.4.4(ii),
pp. 85]{Zalinescu2002}). Thus, the integrand in \eqref{eq:dissipation_physical_cont}
is nonnegative and vanishes when 
$(x(t),u(t))=(x_\star(t),u_\star(t))$, so  
\begin{equation}\label{eq:diss_zero_cont}
	%\begin{aligned}
	\mathscr D(\xi)\geq 0, \;
		\mathscr D(\xi)=0
		\Longleftrightarrow
		(x,u)=(x_\star,u_\star)
		\;\text{a.e. on }(0,T).
	%\end{aligned}
\end{equation}
Note that $\mathscr D(\xi)=0$ does not impose any condition on the costate $p$.
\begin{prn}\label{prn:energy_cont}
	The mild solution
	$\xi(\tau)=S_W(\tau)\xi_0$ of
	\eqref{eq:weighted_optimizer_cont} satisfies
	\begin{equation}\label{eq:energy_identity_cont}
		V_W(\xi(\tau))
		+\int_0^\tau\mathscr D(\xi(s))\,ds
		=V_W(\xi_0),
		\qquad \tau\geq 0
	\end{equation}
    for every $\xi_0\in\mathcal X$.  
    Moreover,  $\xi_\star$ is Lyapunov stable in
$\mathcal X_W$, every orbit is bounded in $\mathcal X_W$,
and $\mathscr D(\xi(\cdot))\in L^1(0,\infty;\mathbb R)$. 
    Furthermore, $V_W(\xi(\cdot))\in C^1([0,\infty);\mathbb R)$ and satisfies
    \begin{equation}
        \label{eq:VWde_Xideeta}
        \frac{d}{d\tau}V_W(\xi(\tau)) =-\mathscr D(\xi(\tau)), \qquad \tau > 0. 
    \end{equation}
\end{prn}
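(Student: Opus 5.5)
We first prove the identity \eqref{eq:energy_identity_cont} for strong solutions and then pass to the limit for mild solutions. Let $\xi_0\in D(J)$. By Theorem~\ref{thm:global_wellposed_cont}, $\xi(\tau)=S_W(\tau)\xi_0$ is strong, lies in $W^{1,\infty}(0,b;\mathcal X_W)$, and satisfies $\xi(\tau)\in D(J)$ for all $\tau\geq0$. Hence $\tau\mapsto V_W(\xi(\tau))=\frac12\|\xi(\tau)-\xi_\star\|_W^2$ is absolutely continuous on bounded intervals. By Proposition~\ref{prn:equilibrium_cont}, $\mathcal A_W\xi_\star=0$, i.e. $J\xi_\star-\partial\Phi(\xi_\star)+h_{\bar x}=0$. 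Subtracting this from \eqref{eq:weighted_optimizer_cont} gives, for a.e.\ $\tau$,
\[
\frac{d}{d\tau}V_W(\xi(\tau))
=\bigl\langle W\xi'(\tau),\xi(\tau)-\xi_\star\bigr\rangle_{\mathcal X}
=\bigl\langle J(\xi-\xi_\star),\xi-\xi_\star\bigr\rangle_{\mathcal X}-\mathscr D(\xi(\tau)).
\]
Since $\xi-\xi_\star\in D(J)$ and $J$ is skew-adjoint (Proposition~\ref{prn:-J_max_monotone}), the first term vanishes. Integrating over $[0,\tau]$ yields \eqref{eq:energy_identity_cont} for $\xi_0\in D(J)$.

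For general $\xi_0\in\mathcal X$, choose $\xi_{0,n}\in D(J)$ with $\xi_{0,n}\to\xi_0$ in $\mathcal X_W$; this is possible by density (Corollary~\ref{prn:weighted_maximal_cont}). By the contraction estimate \eqref{eq:contraction_cont}, $S_W(s)\xi_{0,n}\to S_W(s)\xi_0$ uniformly in $s\in[0,\tau]$. Hence $V_W(S_W(\tau)\xi_{0,n})\to V_W(S_W(\tau)\xi_0)$. For the dissipation term, $\mathscr D$ is locally Lipschitz on $\mathcal X$: using \eqref{eq:subgradient_lipschitz_cont},
\[
|\mathscr D(\xi)-\mathscr D(\eta)|\le L_K\|\xi-\eta\|_{\mathcal X}\bigl(\|\xi-\xi_\star\|_{\mathcal X}+\|\eta-\xi_\star\|_{\mathcal X}\bigr),
\]
and the orbits are uniformly bounded on $[0,\tau]$. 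Therefore $\int_0^\tau\mathscr D(\xi_n(s))\,ds\to\int_0^\tau\mathscr D(\xi(s))\,ds$, and the identity holds for every mild solution. I expect this step to be the main technical point: it needs the norm equivalence \eqref{eq:weighted_norm_bounds_cont} together with the Lipschitz bound, and care because mild solutions need not lie in $D(J)$.

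The remaining claims follow from the identity. Since $\mathscr D\ge0$ by \eqref{eq:diss_zero_cont}, we get $\|\xi(\tau)-\xi_\star\|_W\le\|\xi_0-\xi_\star\|_W$. This gives Lyapunov stability, with $\delta=\varepsilon$ in the $W$-norm, and boundedness of every orbit. It also gives $\int_0^\infty\mathscr D(\xi(s))\,ds\le V_W(\xi_0)<\infty$, so $\mathscr D(\xi(\cdot))\in L^1(0,\infty)$.

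Finally, $s\mapsto\xi(s)$ is continuous into $\mathcal X_W$ by Theorem~\ref{thm:global_wellposed_cont}, and $\mathscr D$ is continuous on $\mathcal X$, so $s\mapsto\mathscr D(\xi(s))$ is continuous. The identity then writes $V_W(\xi(\tau))=V_W(\xi_0)-\int_0^\tau\mathscr D(\xi(s))\,ds$ with a continuous integrand. By the fundamental theorem of calculus, $V_W(\xi(\cdot))\in C^1([0,\infty))$ and \eqref{eq:VWde_Xideeta} holds for every $\tau>0$, even though $\xi$ itself need not be differentiable.
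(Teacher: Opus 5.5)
Your proposal is correct and follows essentially the same route as the paper. Both arguments first derive the identity for strong solutions from the equilibrium equation and the skew-adjointness of $J$, then extend it to mild solutions by density, the contraction estimate and the local Lipschitz bound on $\mathscr D$, and finally obtain stability, boundedness, integrability and the $C^1$ property from the identity and the continuity of $\mathscr D(\xi(\cdot))$.
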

\begin{proof}
First take $\xi_0\in D(J)$. From Theorem \ref{thm:global_wellposed_cont}, $\xi$ is a strong solution and $\xi(\tau)\in D(J)$ for every $\tau\geq 0$, $\xi\in W^{1,\infty}(0,b;\mathcal X_W)$ for every $b>0$. Since $D(J)$ is linear and $\xi_\star\in D(J)$, then $\eta (\tau)=\xi (\tau)-\xi_\star \in D(J)$. Using 
$W\eta'(\tau)=J\eta(\tau)- \bigl(\partial\Phi(\xi(\tau))-\partial\Phi(\xi_\star)\bigr)$ and and the skew-symmetry of $J$ (see Proposition \ref{prn:-J_max_monotone}), 
we obtain
\begin{align*}
\frac{d}{d\tau}V_W(\xi(\tau))
&=\langle W\eta'(\tau),\eta(\tau)\rangle_{\mathcal X}\\
&=\langle J\eta(\tau),\eta(\tau)\rangle_{\mathcal X}
-\mathscr D(\xi(\tau))
=-\mathscr D(\xi(\tau))
\end{align*}
for a.e. $\tau>0$. Note that $\xi$, and hence $\eta$,
is Lipschitz on $[0,b]$. Thus $V_W(\xi(\cdot))$ is Lipschitz, and therefore absolutely continuous, on every bounded interval of algorithmic time, so integration gives
\eqref{eq:energy_identity_cont}. \\
Now let $\xi_0\in\mathcal X$. Choose $\xi_{0,n}\in D(J)$ such that $\xi_{0,n}\to\xi_0$ in $\mathcal X_W$, and take  
$\xi_n(\tau):=S_W(\tau)\xi_{0,n}$. Using \eqref{eq:contraction_cont}, for every $b>0$,
\[
\sup_{0\leq s\leq b}\|\xi_n(s)-\xi(s)\|_W
\leq\|\xi_{0,n}-\xi_0\|_W
\longrightarrow0.
\]
The same uniform convergence holds in $\mathcal X$
by \eqref{eq:weighted_norm_bounds_cont}. Now, \eqref{eq:subgradient_lipschitz_cont} gives 
\begin{equation}\label{eq:dissipation_continuity_cont}
|\mathscr D(\xi)-\mathscr D(\zeta)|\leq
L_K\bigl(\|\xi-\xi_\star\|_{\mathcal X}
+\|\zeta-\xi_\star\|_{\mathcal X}\bigr)
\|\xi-\zeta\|_{\mathcal X},
\end{equation}
for all $\xi,\zeta\in\mathcal X$, so $\mathscr D$ is Lipschitz on every bounded subset of $\mathcal X$. For fixed $b$, $\xi_n$ and $\xi$ are uniformly bounded on $[0,b]$, so
$\mathscr D(\xi_n(\cdot))\to\mathscr D(\xi(\cdot))$
uniformly on $[0,b]$. Taking the limit in
\eqref{eq:energy_identity_cont} for $\xi_n$ proves the same identity for $\xi$ on $[0,b]$. Since $b>0$ is arbitrary, it holds for every $\tau\geq 0$. Using \eqref{eq:diss_zero_cont} and
\eqref{eq:energy_identity_cont}, we obtain 
%\[
$\|\xi(\tau)-\xi_\star\|_W
\leq\|\xi_0-\xi_\star\|_W$,
$\tau\geq 0$.
%\] 
Therefore, we have Lyapunov stability of $\xi_\star$ and boundedness of every orbit $\{\xi(\tau)=S_W(\tau)\xi_0:\tau\geq0\}$, $\xi_0\in\mathcal X$. 
Then \eqref{eq:energy_identity_cont} gives, for every $\tau\geq0$,
$\displaystyle \int_0^\tau\mathscr D(\xi(s))\,ds
=V_W(\xi_0)-V_W(\xi(\tau))\leq V_W(\xi_0)$,
because $V_W(\xi(\tau))\geq0$. 
Since $\mathscr D(\xi)\geq 0$, 
\[
\int_0^\infty\mathscr D(\xi(s))\,ds
=\lim_{\tau\to\infty}\int_0^\tau\mathscr D(\xi(s))\,ds
\leq V_W(\xi_0)<\infty  
\]
by monotone convergence, so $\mathscr D(\xi(\cdot))\in L^1(0,\infty;\mathbb R)$. 
Finally, $\xi\in C([0,\infty);\mathcal X_W)$ and
$\mathscr D$ is continuous by 
\eqref{eq:dissipation_continuity_cont}, so 
$\mathscr D(\xi(\cdot))\in C([0,\infty);\mathbb R)$. Then, by \eqref{eq:energy_identity_cont} and the fundamental theorem of calculus,
$V_W(\xi(\cdot))\in C^1([0,\infty);\mathbb R)$,
and \eqref{eq:VWde_Xideeta} holds.
\end{proof}

\subsection{Convergence of relatively compact orbits}
\label{sect:compact_convergence_cont}

We apply the invariance principle stated in
\cite[Theorem 2.1]{SlemrodInfante1969}. 
% This principle applies to continuous dynamical systems on a Banach space and requires relative compactness of the positive orbit. 
We verify its hypotheses for the semigroup of Theorem \ref{thm:global_wellposed_cont} and identify the invariant subset of the zero-dissipation set for mild solutions. 
This gives a convergence result for the Hilbert
space realization considered here, under a compactness condition (see the discussion following \cite[Proposition 3.2]{vanderSchaft2026PDGJuly}). In this subsection, compactness and (strong) convergence
refer to the norm topology of $\mathcal X$. From \eqref{eq:weighted_norm_bounds_cont}, these notions are unchanged when $\mathcal X$ is equipped with norm $\|\cdot\|_W$. 

We first establish an identity (needed in the proof of Theorem \ref{thm:compact_convergence_cont}) that allows to 
use the operator $J$ along mild solutions. 

\begin{lemma}\label{lem:integrated_mild_cont}
For every $\xi_0\in\mathcal X$, the mild solution
$\xi(\tau)=S_W(\tau)\xi_0$, $t\geq 0$, satisfies
$\displaystyle \int_0^\tau \xi(s)\,ds\in D(J)$, and 
\begin{equation}
\label{eq:integrated_mild_cont}
W\bigl(\xi(\tau)-\xi_0\bigr)
=J\left(\int_0^\tau \xi(s)\,ds\right)
-\int_0^\tau\partial\Phi(\xi(s))\,ds
+\tau h_{\bar x}.
\end{equation}
\end{lemma}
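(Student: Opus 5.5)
The plan is a density argument: prove \eqref{eq:integrated_mild_cont} for strong solutions, then pass to the limit using that $J$ is closed.

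\textbf{Step 1: strong solutions.} Take $\xi_0\in D(J)$. By Theorem~\ref{thm:global_wellposed_cont}, $\xi\in W^{1,\infty}(0,b;\mathcal X_W)$ for every $b>0$. Moreover $\xi(s)\in D(J)$ for all $s$, and
\[
W\xi'(s)=J\xi(s)-\partial\Phi(\xi(s))+h_{\bar x}\quad\text{for a.e. } s.
\]
Since $\xi'\in L^\infty(0,b;\mathcal X)$, the map $s\mapsto \partial\Phi(\xi(s))$ is continuous (it is Lipschitz by \eqref{eq:subgradient_lipschitz_cont}), and $h_{\bar x}$ is constant, the map $s\mapsto J\xi(s)$ is Bochner integrable on $[0,\tau]$. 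Also $\xi$ is Lipschitz with values in the Hilbert space, so $\xi(\tau)-\xi_0=\int_0^\tau\xi'(s)\,ds$. Integrating the equation therefore gives
\[
W(\xi(\tau)-\xi_0)=\int_0^\tau J\xi(s)\,ds-\int_0^\tau\partial\Phi(\xi(s))\,ds+\tau h_{\bar x}.
\]

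\textbf{Step 2: move $J$ outside the integral.} The operator $J$ is closed, being skew-adjoint by Proposition~\ref{prn:-J_max_monotone}. Both $\xi$ and $J\xi$ are Bochner integrable with $\xi(s)\in D(J)$. The standard Hille theorem for closed operators then gives
\[
\int_0^\tau\xi\,ds\in D(J),\qquad J\int_0^\tau\xi\,ds=\int_0^\tau J\xi\,ds.
\]
If one wants to avoid citing this, approximate the integral by Riemann sums; this works because $\xi$ is continuous and $D(J)$ is a linear space. This settles the case $\xi_0\in D(J)$.

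\textbf{Step 3: general $\xi_0$.} Let $\xi_0\in\mathcal X$ and choose $\xi_{0,n}\in D(J)$ with $\xi_{0,n}\to\xi_0$; this is possible since $D(J)$ is dense by Corollary~\ref{prn:weighted_maximal_cont}. Set $\xi_n=S_W(\cdot)\xi_{0,n}$. By \eqref{eq:contraction_cont}, $\xi_n\to\xi$ uniformly on $[0,\tau]$. Consequently:
\begin{itemize}
\item $y_n:=\int_0^\tau\xi_n\,ds\to y:=\int_0^\tau\xi\,ds$ in $\mathcal X$;
\item $\int_0^\tau\partial\Phi(\xi_n)\,ds\to\int_0^\tau\partial\Phi(\xi)\,ds$, using the Lipschitz bound \eqref{eq:subgradient_lipschitz_cont};
\item $W(\xi_n(\tau)-\xi_{0,n})\to W(\xi(\tau)-\xi_0)$.
\end{itemize}
Rearranging the identity from Steps 1--2 for $\xi_n$ expresses $Jy_n$ through these three convergent quantities, so $Jy_n$ converges in $\mathcal X$. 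Since $J$ is closed, $y\in D(J)$ and $Jy=\lim_n Jy_n$. Passing to the limit in the identity yields \eqref{eq:integrated_mild_cont}.

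\textbf{Main obstacle.} The key point is Step 3: $J$ is unbounded and $\xi(s)$ need not lie in $D(J)$. The proof therefore depends on the closedness of $J$ from Proposition~\ref{prn:-J_max_monotone}, combined with the fact that every other term converges because $\partial\Phi$ is globally Lipschitz. The integrability in Step 1 is a minor check by comparison.
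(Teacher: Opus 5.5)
Your proof is correct, but it takes a different route from the paper. The paper never integrates $J\xi(s)$ as a vector-valued function. For strong solutions it pairs \eqref{eq:weighted_optimizer_cont} with a fixed test vector $v\in D(J)$ and moves $J$ onto $v$ by skew-adjointness. It then integrates the scalar absolutely continuous function $\tau\mapsto\langle W\xi(\tau),v\rangle_{\mathcal X}$, which gives the weak identity \eqref{eq:weak_integrated_mild_cont}. In that identity $\xi$ enters only through $\langle\int_0^\tau\xi\,ds,Jv\rangle_{\mathcal X}$, so it passes to the mild-solution limit using only uniform convergence of the approximants. The membership $\int_0^\tau\xi\,ds\in D(J)$ is then read off from the definition of $J^*$ together with $D(J^*)=D(J)$ and $J^*=-J$.

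You instead integrate the strong equation directly in the Bochner sense. This requires checking that $s\mapsto J\xi(s)$ is Bochner integrable, which you do correctly via $J\xi=W\xi'+\partial\Phi(\xi)-h_{\bar x}$ a.e., and invoking Hille's theorem. In the limit you then use only that the graph of $J$ is closed.

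The trade-off is this. The paper's duality argument needs the full identification of the adjoint but avoids vector-valued integration of $J\xi$. Your argument needs only closedness of $J$, at the cost of the integrability check and Hille's theorem.

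One small caveat concerns your Riemann-sum fallback in Step~2, which does not work as stated. To conclude via closedness you would need the Riemann sums of $J\xi(\cdot)$ to converge. Continuity of $\xi$ does not give this, since $J\xi(\cdot)$ is only known to be measurable and essentially bounded. Keep the Hille argument, or switch to the paper's weak formulation.
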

\begin{proof}
%A proof is provided in Appendix \ref{prooflem:integrated_mild_cont}. 
First suppose that $\xi_0\in D(J)$. By Theorem
\ref{thm:global_wellposed_cont}, $\xi$ is a strong
solution. For any $v\in D(J)$, pairing
\eqref{eq:weighted_optimizer_cont} with $v$, using
the skew-adjointness of $J$ 
%from Proposition \ref{prn:-J_max_monotone}, 
and integrating in algorithmic time give
\begin{equation}\label{eq:weak_integrated_mild_cont}
	\begin{aligned}
		&\left\langle
		W\bigl(\xi(\tau)-\xi_0\bigr)
		+\int_0^\tau\partial\Phi(\xi(s))\,ds
		-\tau h_{\bar x},v
		\right\rangle_{\mathcal X}\\
		&\hspace{15mm}
		=-\left\langle\int_0^\tau\xi(s)\,ds,Jv
		\right\rangle_{\mathcal X}.
	\end{aligned}
\end{equation}
Indeed, the %scalar function
$\tau\mapsto\langle W\xi(\tau),v\rangle_{\mathcal X}$
is absolutely continuous on every bounded interval,
so its almost-everywhere derivative can be integrated. 
Now let $\xi_0\in\mathcal X$. Since $D(J)$ is dense,
choose $\xi_{0,k}\in D(J)$ such that
$\xi_{0,k}\to\xi_0$ in $\mathcal X_W$, and put
$\xi_k(\tau)=S_W(\tau)\xi_{0,k}$. By
\eqref{eq:contraction_cont}, for every $b>0$,
%\[
$\displaystyle 
\sup_{0\leq s\leq b}
\|\xi_k(s)-\xi(s)\|_W
\leq\|\xi_{0,k}-\xi_0\|_W
\longrightarrow 0$. 
%\]
Then 
\eqref{eq:weighted_norm_bounds_cont} gives the 
uniform convergence in $\mathcal X$. Moreover,
\eqref{eq:subgradient_lipschitz_cont} implies that
$\partial\Phi(\xi_k(\cdot))$ converges uniformly to
$\partial\Phi(\xi(\cdot))$ on $[0,b]$ in $\mathcal X$.
We can therefore pass to the limit in
\eqref{eq:weak_integrated_mild_cont}. Thus \eqref{eq:integrated_mild_cont} holds for every mild solution, every
$v\in D(J)$, and every $\tau\geq0$. 
For each fixed $\tau$, the vector paired with $v$
on the left-hand side of
\eqref{eq:weak_integrated_mild_cont} is in
$\mathcal X$. By the definition of the adjoint,
\eqref{eq:weak_integrated_mild_cont} implies
$\displaystyle
\int_0^\tau\xi(s)\,ds\in D(J^*)$.
Since $D(J^*)=D(J)$ and $J^*=-J$, 
Proposition \ref{prn:-J_max_monotone} gives
\eqref{eq:integrated_mild_cont}.
\end{proof}
The next result states that the state, control, and costate converge strongly in their respective $L^2(0,T)$ spaces to $x_\star$, $u_\star$, and $p_\star$ if the corresponding positive orbit is relatively compact. 
\begin{thm}\label{thm:compact_convergence_cont}
	Let $\xi_0\in\mathcal X$. If the positive orbit
	%\[
	$\mathcal O^+(\xi_0):=
	\{S_W(\tau)\xi_0:\tau\geq 0\}$
	%\]
	is relatively compact in $\mathcal X$, then
	\begin{equation}\label{eq:compact_convergence_cont}
		\lim_{\tau\to\infty}
		\|S_W(\tau)\xi_0-\xi_\star\|_{\mathcal X}=0,
	\end{equation}
\end{thm}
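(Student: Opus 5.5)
The plan is to apply the LaSalle invariance principle of \cite[Theorem 2.1]{SlemrodInfante1969} to the contraction semigroup $S_W$ of Theorem \ref{thm:global_wellposed_cont}, with Lyapunov functional $V_W$ from \eqref{eq:energy_diss_cont}. The hypotheses are verified as follows. The map $(\tau,\xi_0)\mapsto S_W(\tau)\xi_0$ is jointly continuous: it is continuous in $\tau$ and nonexpansive in $\xi_0$ by \eqref{eq:contraction_cont}. $V_W$ is continuous on $\mathcal X$. Along every orbit, $V_W$ is nonincreasing by \eqref{eq:VWde_Xideeta}. Since $\mathcal O^+(\xi_0)$ is relatively compact, the $\omega$-limit set $\omega(\xi_0)$ is nonempty, compact and invariant under $S_W$, and $\operatorname{dist}(S_W(\tau)\xi_0,\omega(\xi_0))\to0$. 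Moreover $V_W(S_W(\tau)\xi_0)$ decreases to some limit $c\ge0$, and $V_W\equiv c$ on $\omega(\xi_0)$.

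Fix $\eta_0\in\omega(\xi_0)$ and put $\eta(\tau)=S_W(\tau)\eta_0\in\omega(\xi_0)$. Since $V_W(\eta(\tau))=c$ for all $\tau\ge0$, the identity \eqref{eq:energy_identity_cont} gives $\int_0^\tau\mathscr D(\eta(s))\,ds=0$. As $\mathscr D\ge0$ and $\mathscr D(\eta(\cdot))$ is continuous, we get $\mathscr D(\eta(s))=0$ for every $s$. By \eqref{eq:diss_zero_cont}, the $x$- and $u$-components of $\eta(s)$ then equal those of $\xi_\star$ for all $s\ge0$. Writing $\eta(s)=(\widetilde x_\star,u_\star,q(s))$ with $\widetilde x_\star=x_\star-\bar x$, it follows that $\partial\Phi(\eta(s))=\partial\Phi(\xi_\star)$ for all $s$.

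The main obstacle is identifying the costate, because $\eta$ is only a mild solution. Here Lemma \ref{lem:integrated_mild_cont} will be used. Subtract the equilibrium relation $0=J\xi_\star-\partial\Phi(\xi_\star)+h_{\bar x}$, integrated over $[0,\tau]$, from \eqref{eq:integrated_mild_cont}, and set $\zeta:=\int_0^\tau(\eta(s)-\xi_\star)\,ds=(0,0,Q)$ with $Q=\int_0^\tau(q-p_\star)\,ds$. Then $\zeta\in D(J)$, so $Q\in H^1$ with $Q(T)=0$, and
\[
W(\eta(\tau)-\eta_0)=J\zeta .
\]
The first two components are zero on the left. By \eqref{eqn:J_Definition} they force $-\dot Q-A^\top Q=0$ and $B^\top Q=0$. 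Together with $Q(T)=0$, the ODE gives $Q\equiv0$. The third component then yields $P(q(\tau)-q(0))=0$, so $q$ is constant in $\tau$. Hence $\eta_0$ is an equilibrium, namely $\eta_0\in D(J)$ with $\mathcal A_W\eta_0=0$ (taking $\tau$ small also gives $\eta(\tau)=\eta_0$ directly). Proposition \ref{prn:equilibrium_cont} then gives $\eta_0=\xi_\star$.

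An alternative for this step is to argue directly with constancy of $q$. The relation $\eta(\tau)=\eta_0$ means $\eta_0$ is a fixed point of the semigroup. For such a point, the resolvent formula \eqref{eq:resolvent_limit_cont}, or the standard fact that fixed points of the semigroup generated by a maximal monotone operator are exactly its zeros, identifies $\eta_0$ with $\xi_\star$.

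Therefore $\omega(\xi_0)=\{\xi_\star\}$. Relative compactness together with $\operatorname{dist}(S_W(\tau)\xi_0,\omega(\xi_0))\to0$ gives \eqref{eq:compact_convergence_cont}. Alternatively, $c=V_W(\xi_\star)=0$, so $V_W(S_W(\tau)\xi_0)\to0$, and \eqref{eq:weighted_norm_bounds_cont} converts this into convergence in $\mathcal X$.
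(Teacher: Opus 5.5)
Your proposal is correct and follows essentially the same route as the paper: LaSalle invariance for $S_W$ with Lyapunov functional $V_W$, reduction of zero-dissipation trajectories to the form $(x_\star-\bar x,u_\star,q(s))$, and identification of the costate via Lemma \ref{lem:integrated_mild_cont} together with the terminal-value problem. The only differences are minor: you use the first component to get $Q\equiv0$ before using the third, whereas the paper uses the third component to get constancy first and then $J\eta(0)=0$; and you unpack the invariance principle through $\omega(\xi_0)$, noting $c=V_W(\xi_\star)=0$, instead of citing \cite[Theorem 2.1]{SlemrodInfante1969} directly, which is a valid and slightly more self-contained substitute.
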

\begin{proof}
    We verify the hypotheses of
\cite[Theorem 2.1]{SlemrodInfante1969}
on the space $\mathcal X_W$. 
First, Theorem \ref{thm:global_wellposed_cont} and
\eqref{eq:contraction_cont}, gives that 
$(\tau,\zeta)\mapsto S_W(\tau)\zeta$ is a continuous dynamical system on $\mathcal X_W$ in the sense of \cite[Definition 2.1]{SlemrodInfante1969}. Then, $V_W$ from \eqref{eq:energy_diss_cont} is
continuous and nonnegative on $\mathcal X_W$.
For every $\zeta\in\mathcal X$, the identity
\eqref{eq:energy_identity_cont} gives
\[
\frac{V_W(S_W(h)\zeta)-V_W(\zeta)}{h}
=
-\frac{1}{h}\int_0^h
\mathscr D(S_W(s)\zeta)\,ds,
\qquad h>0.
\] 
Since $S_W(\cdot)\zeta$ continuous, using 
\eqref{eq:dissipation_continuity_cont} 
we have 
\begin{equation}\label{eq:lyapunov_semigroup_derivative_cont}
	\lim_{h\downarrow0}
	\frac{V_W(S_W(h)\zeta)-V_W(\zeta)}{h}
	=-\mathscr D(\zeta)\leq0.
\end{equation} 
Therefore, $V_W$ satisfies the
Lyapunov-functional conditions of
\cite[Definitions 2.4-2.5] {SlemrodInfante1969}
on $\mathcal X_W$. \\
Now, define the zero-dissipation set 
$\mathcal Z:=\{\zeta\in\mathcal X:
\mathscr D(\zeta)=0\}$. We show that its largest positively invariant subset is $\{\xi_\star\}$
(see \cite[Definition 2.2]{SlemrodInfante1969}). 
Let $\zeta\in\mathcal X$ satisfy $\mathscr D(S_W(s)\zeta)=0$, $s\geq 0$. 
Using \eqref{eq:diss_zero_cont}, we have that for every $s\geq0$, the homogenized state and control components of
$S_W(s)\zeta$ satisfy
$\widetilde x(\cdot,s)=x_\star-\bar x$ and
$u(\cdot,s)=u_\star$, respectively. Then $\eta(s):=S_W(s)\zeta-\xi_\star=(0,0,\rho(s))$, 
$\rho(s)\in L^2(0,T;\mathbb R^n)$. 
Moreover, \eqref{eq:subgradient_singleton_cont}
implies $\partial\Phi(S_W(s)\zeta)
=\partial\Phi(\xi_\star)$, $s\geq 0$. Subtracting \eqref{eq:integrated_mild_cont} applied to  $\xi_\star$
from that applied to $S_W(\cdot)\zeta$, and using the linearity of $J$, gives
$\displaystyle \int_0^s\eta(a)\,da\in D(J)$ and 
\begin{equation}
\label{eq:zero_dissipation_integrated_cont}
    W\bigl(\eta(s)-\eta(0)\bigr)
=J\left(\int_0^s\eta(a)\,da\right),\qquad s\geq 0. 
\end{equation}
The first two components of $\eta$ vanish.
Therefore, by \eqref{eqn:J_Definition}, 
the third component of
\eqref{eq:zero_dissipation_integrated_cont} is 
$P\bigl(\rho(s)-\rho(0)\bigr)=0$. 	Since $P$ is positive definite, $\rho(s)=\rho(0)$
in $L^2(0,T;\mathbb R^n)$ for every $s\geq 0$.
Thus $\eta(s)=\eta(0)$ for every $s\geq 0$. 

For any $s>0$, $\displaystyle \int_0^s\eta(a)\,da\in D(J)$ (see above \eqref{eq:zero_dissipation_integrated_cont}) 
gives $s\eta(0)\in D(J)$, so $\eta(0)\in D(J)$, and \eqref{eq:zero_dissipation_integrated_cont} gives $J\eta(0)=0$.  
In particular, if
$\rho_0:=\rho(0)$, then
\eqref{eqn:J_Definition} and \eqref{eqn:J_Domain}
imply
\[
\rho_0\in H^1(0,T;\mathbb R^n),\qquad
\dot\rho_0+A^\top\rho_0=0,\qquad
\rho_0(T)=0.
\]
The terminal-value problem has the unique solution
%\[
$\rho_0(t)=e^{A^\top(T-t)}\rho_0(T)=0$,
$t\in[0,T]$.
%\]
It follows that $\zeta=\xi_\star$.

Conversely, $\xi_\star$ is an equilibrium by
Proposition \ref{prn:equilibrium_cont}, and
$\mathscr D(\xi_\star)=0$. Thus
$\{\xi_\star\}$ is positively invariant.
It is also invariant in the sense of \cite[Definition 2.3]{SlemrodInfante1969}, since the constant equilibrium trajectory is defined
for all algorithmic times in $\mathbb R$. 
Every invariant subset of $\mathcal Z$ is
positively invariant, so the largest invariant subset of $\mathcal Z$ is $\{\xi_\star\}$. 
By the assumed relative compactness of $\mathcal O^+(\xi_0)$ in $\mathcal X$ and \eqref{eq:weighted_norm_bounds_cont}, its closure is compact in $\mathcal X_W$. 
All hypotheses of \cite[Theorem 2.1]{SlemrodInfante1969} are now satisfied, so 
%\[
$\|S_W(\tau)\xi_0-\xi_\star\|_W
\longrightarrow 0$
as $\tau\to\infty$. 
%\]
Then \eqref{eq:weighted_norm_bounds_cont} proves
\eqref{eq:compact_convergence_cont}. 
%Since $x=\widetilde x+\bar x$ and the lift is fixed, convergence of $\widetilde x$ to $x_\star-\bar x$ is equivalent to convergence of $x$ to $x_\star$. 
Since $\widetilde x-(x_\star-\bar x)=x-x_\star$,
% the definition \eqref{eqn:X_Definition} of $\mathcal X$ gives 
we obtain 
$L^2$ convergence of $x$, $u$, and $p$ to $x_\star$, $u_\star$, and $p_\star$, respectively.  
\end{proof}

% {\rev 
% \begin{obs}
% Note that in the proof of the above theorem we use the terminal condition and
% controllability of $(A,B)$ is not requires.
% \end{obs}
% } 

We next give a sufficient condition for the
relative compactness required in Theorem
\ref{thm:compact_convergence_cont}. 
% The arguments follow \cite[Section 5, p. 7]{SlemrodInfante1969}:
% an orbit is bounded in a stronger space which
% embeds compactly into the space on which the
% invariance principle is applied. We shall use the compact embedding $H^1(0,T)\hookrightarrow L^2(0,T)$. 

\begin{cor}\label{cor:control_regularity_compact_cont}
	Let $\xi_0\in D(J)$ and let
	$\xi(\tau)=S_W(\tau)\xi_0$. Suppose, in addition,
	that the control component satisfies
	\begin{equation}\label{eq:uniform_control_H1_cont}
		\begin{aligned}
			&u(\cdot,\tau)\in H^1(0,T;\mathbb R^m),
			\qquad \tau\geq0,\\
			&\sup_{\tau\geq0}
			\|u(\cdot,\tau)\|_{H^1(0,T;\mathbb R^m)}
			<\infty.
		\end{aligned}
	\end{equation}
	Then $\mathcal O^+(\xi_0)$ is relatively compact
	in $\mathcal X$, and
	\eqref{eq:compact_convergence_cont} holds.
\end{cor}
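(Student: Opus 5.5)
The plan is to show that, under \eqref{eq:uniform_control_H1_cont}, every component of $\xi(\tau)=(\widetilde x(\cdot,\tau),u(\cdot,\tau),p(\cdot,\tau))$ stays in a bounded subset of $H^1(0,T)$. Relative compactness in $\mathcal X$ then follows from the Rellich compact embedding $H^1(0,T)\hookrightarrow L^2(0,T)$ applied componentwise. Since a finite product of relatively compact sets is relatively compact, $\mathcal O^+(\xi_0)$ is relatively compact in $\mathcal X$, and Theorem \ref{thm:compact_convergence_cont} yields \eqref{eq:compact_convergence_cont}. The $u$-component is handled directly by the hypothesis, so the work is in bounding $\widetilde x$ and $p$ in $H^1$ uniformly in $\tau$.

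For this I would use the strong-solution regularity from Theorem \ref{thm:global_wellposed_cont}. Because $\xi_0\in D(J)$, we have $\xi(\tau)\in D(J)$ for all $\tau\geq0$, and \eqref{eq:bound} gives $\|\mathcal A_W\xi(\tau)\|_W\le\|\mathcal A_W\xi_0\|_W=:c_0$. By \eqref{eq:weighted_operator_cont}, $J\xi(\tau)=\partial\Phi(\xi(\tau))-h_{\bar x}-W\mathcal A_W\xi(\tau)$. By Proposition \ref{prn:energy_cont}, the orbit is bounded in $\mathcal X$, i.e. $\|\xi(\tau)\|_{\mathcal X}\le R$ for some $R$. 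The Lipschitz bound \eqref{eq:subgradient_lipschitz_cont}, applied at $\zeta=\xi_\star$, gives $\|\partial\Phi(\xi(\tau))\|_{\mathcal X}\le\|\partial\Phi(\xi_\star)\|_{\mathcal X}+L_K(R+\|\xi_\star\|_{\mathcal X})$. Combining these with \eqref{eq:weighted_norm_bounds_cont}, $\sup_{\tau\ge0}\|J\xi(\tau)\|_{\mathcal X}<\infty$.

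Next I would read off the components of $J\xi(\tau)$ from \eqref{eqn:J_Definition}. The first component gives $\dot p=-A^\top p-(J\xi)_1$, so $\|\dot p(\cdot,\tau)\|_{L^2}\le\|A\|\,\|p(\cdot,\tau)\|_{L^2}+\|J\xi(\tau)\|_{\mathcal X}$, which is bounded uniformly in $\tau$. The third component gives $\dot{\widetilde x}=A\widetilde x+Bu-(J\xi)_3$, which is likewise uniformly bounded in $L^2$. Together with the $L^2$ bounds, $\widetilde x(\cdot,\tau)$ and $p(\cdot,\tau)$ are bounded in $H^1(0,T;\mathbb R^n)$ uniformly in $\tau$. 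Note that the second component $-B^\top p$ of $J\xi$ gives no information on $\dot u$; this is exactly why the extra hypothesis \eqref{eq:uniform_control_H1_cont} is needed.

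The main step is the uniform $H^1$ bound for $\widetilde x$ and $p$. It rests on the monotonicity estimate \eqref{eq:bound} and on the linear growth of $\partial\Phi$ coming from Assumption \ref{ass: K Global Lipschitz}. Once that is in place, the compact-embedding argument and the appeal to Theorem \ref{thm:compact_convergence_cont} are routine.
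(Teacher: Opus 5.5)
Your proposal is correct and follows the paper's proof essentially step by step. You get the uniform $\mathcal X$-bound from Proposition \ref{prn:energy_cont}, the uniform bound on $J\xi(\tau)$ from \eqref{eq:bound} and the linear growth of $\partial\Phi$, the uniform $H^1$ bounds on $\widetilde x$ and $p$ from the first and third components of \eqref{eqn:J_Definition}, and conclude via compact embedding and Theorem \ref{thm:compact_convergence_cont}. The differences are cosmetic: you anchor the Lipschitz estimate at $\xi_\star$ instead of $0$, and you use $H^1(0,T)\hookrightarrow L^2(0,T)$ directly rather than the paper's route through the compact embedding into $C([0,T])$.
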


\begin{proof}
    Proposition \ref{prn:energy_cont} and
\eqref{eq:weighted_norm_bounds_cont} give 
$\displaystyle \sup_{\tau\geq0}\|\xi(\tau)\|_{\mathcal X}<\infty$. Since $\xi_0\in D(J)$, Theorem
\ref{thm:global_wellposed_cont} gives
$\xi(\tau)\in D(J)$ for every $\tau\geq 0$. 
From \eqref{eq:weighted_operator_cont}, 
$J\xi(\tau)=
\partial\Phi(\xi(\tau))-h_{\bar x}
-W\mathcal A_W\xi(\tau)$. The Lipschitz estimate
\eqref{eq:subgradient_lipschitz_cont} implies 
$\|\partial\Phi(\xi(\tau))\|_{\mathcal X}
\leq \|\partial\Phi(0)\|_{\mathcal X}
+L_K\|\xi(\tau)\|_{\mathcal X}$. 
This inequality, together with \eqref{eq:bound}, $W$ bounded, and \eqref{eq:weighted_norm_bounds_cont}, gives 
\begin{equation}\label{eq:uniform_J_bound_cont}
	\sup_{\tau\geq0}
	\|J\xi(\tau)\|_{\mathcal X}<\infty.
\end{equation}
Using %the first and third components of 
\eqref{eqn:J_Definition}, boundedness of $\xi$ in $\mathcal X$ and \eqref{eq:uniform_J_bound_cont} 
we obtain 
\[
\sup_{\tau\geq0}
\left(
\|\widetilde x(\cdot,\tau)\|_{H^1(0,T;\mathbb R^n)}
+\|p(\cdot,\tau)\|_{H^1(0,T;\mathbb R^n)}
\right)<\infty.
\]
Together with \eqref{eq:uniform_control_H1_cont},
this shows that the orbit is bounded in 
$H^1(0,T;\mathbb R^n)
\times H^1(0,T;\mathbb R^m)
\times H^1(0,T;\mathbb R^n)$. 
%Since $T<\infty$, this product space embeds compactly into $\mathcal X$; see \cite[Theorem 8.8, pp. 212-213]{Brezis2011},applied componentwise. More precisely, 
\cite[Theorem 8.8, pp. 212-213]{Brezis2011}
gives the compact embedding $H^1(0,T)\hookrightarrow C([0,T])$,
and uniform convergence implies $L^2$ convergence because
$\|f\|_{L^2(0,T)}\leq\sqrt{T}\,\|f\|_{C([0,T])}$
for every $f\in C([0,T])$. Applying this result componentwise to the $2n+m$ scalar
coordinates of $\widetilde x$, $u$, and $p$, and successively extracting subsequences, yields a common subsequence along which all coordinates converge strongly in $L^2(0,T)$.
Hence $\mathcal O^+(\xi_0)$ is relatively compact in
$\mathcal X$, and Theorem \ref{thm:compact_convergence_cont} gives 
\eqref{eq:compact_convergence_cont}.
\end{proof}
The following result extends the convergence to every initial value in $\mathcal X$ (by standard density arguments and the contraction  \eqref{eq:contraction_cont}).

\begin{cor}\label{cor:dense_convergence_cont}
	Suppose that there exists a dense subset
	$\mathcal C\subset\mathcal X$ such that
	$\mathcal O^+(\zeta)$ is relatively compact in
	$\mathcal X$ for every $\zeta\in\mathcal C$.
	Then
	\eqref{eq:compact_convergence_cont} holds for
	every $\xi_0\in\mathcal X$.
	In particular, this conclusion holds if
	$\mathcal C\subset D(J)$ is dense in $\mathcal X$
	and the trajectory starting at every
	$\zeta\in\mathcal C$ satisfies
	\eqref{eq:uniform_control_H1_cont}. The bound in
	that condition may depend on $\zeta$.
\end{cor}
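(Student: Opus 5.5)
The argument is a standard $\varepsilon/3$ density argument built on the contraction estimate \eqref{eq:contraction_cont}. The key observation is that $\xi_\star$ does not depend on the initial value: by Proposition \ref{prn:equilibrium_cont} it is the unique equilibrium of \eqref{eq:weighted_optimizer_cont}. So if trajectories starting in $\mathcal C$ converge to $\xi_\star$, nearby trajectories must follow them, with the contraction controlling the error uniformly in algorithmic time.

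Fix $\xi_0\in\mathcal X$ and $\varepsilon>0$. Since $\mathcal C$ is dense in $\mathcal X$, and the norms are equivalent by \eqref{eq:weighted_norm_bounds_cont}, we can choose $\zeta\in\mathcal C$ with $\|\xi_0-\zeta\|_W<\varepsilon$. By hypothesis $\mathcal O^+(\zeta)$ is relatively compact in $\mathcal X$, so Theorem \ref{thm:compact_convergence_cont} gives $\|S_W(\tau)\zeta-\xi_\star\|_{\mathcal X}\to0$, and hence also in $\|\cdot\|_W$. Applying \eqref{eq:contraction_cont} for every $\tau\ge0$,
\[
\|S_W(\tau)\xi_0-\xi_\star\|_W\le \|S_W(\tau)\xi_0-S_W(\tau)\zeta\|_W+\|S_W(\tau)\zeta-\xi_\star\|_W\le \varepsilon+\|S_W(\tau)\zeta-\xi_\star\|_W .
\]
Taking $\limsup_{\tau\to\infty}$ gives $\limsup_{\tau\to\infty}\|S_W(\tau)\xi_0-\xi_\star\|_W\le\varepsilon$. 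Since $\varepsilon$ is arbitrary, the limit is zero. Passing back to $\|\cdot\|_{\mathcal X}$ via \eqref{eq:weighted_norm_bounds_cont} yields \eqref{eq:compact_convergence_cont}.

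For the ``in particular'' part, Corollary \ref{cor:control_regularity_compact_cont} applies to each $\zeta\in\mathcal C\subset D(J)$ whose trajectory satisfies \eqref{eq:uniform_control_H1_cont}. It shows that $\mathcal O^+(\zeta)$ is relatively compact in $\mathcal X$, so the first part applies. The bound in \eqref{eq:uniform_control_H1_cont} is allowed to depend on $\zeta$ because the first part only uses relative compactness of each individual orbit.

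There is no real obstacle here. The only points to check are that the contraction estimate holds for all initial data in $\mathcal X$, not only in $D(J)$, which Theorem \ref{thm:global_wellposed_cont} provides, and that density in $\mathcal X$ is the same as density in $\mathcal X_W$, which follows from \eqref{eq:weighted_norm_bounds_cont}.
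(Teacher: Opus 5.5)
Your proof is correct and is exactly the argument the paper intends: the paper gives no written proof and only points to ``standard density arguments and the contraction \eqref{eq:contraction_cont}'', which is your $\varepsilon$-approximation plus triangle inequality. Your reduction of the ``in particular'' part to Corollary \ref{cor:control_regularity_compact_cont}, applied orbit by orbit with a $\zeta$-dependent bound, also matches the paper.
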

Next, we illustrate the theory with a strictly but not strongly convex $K$.
\begin{exmp}
\label{ex:logcosh_convergence}
Let $n=m=1$, $T>0$, and let $A,B,x_0\in\mathbb R$ be arbitrary. Choose $X=U=P=1$, so that $W=I$, and take $\bar x(t)=e^{At}x_0$. Then $h_{\bar x}=0$ in \eqref{eq:lift_forcing_cont}. Consider
	%\begin{equation}\label{eq:logcosh_cost}
		$K(x,u)=\log\cosh x+\frac12u^2$.
	%\end{equation}  
First we note that, $K$ is not strongly convex on $\mathbb R^2$. This would need
$\nabla^2K(x,u)\succeq\mu I$ for some $\mu>0$ and all $(x,u)$, whereas $K_{xx}(x,u)=1/\cosh^2x\to0$ as
$|x|\to\infty$. 
Since $0\le K(x,u)\le\frac12(x^2+u^2)$,
Assumption \ref{ass:K_in_L1} holds. 
$\nabla^2K(x,u)=\operatorname{diag}(1/\cosh^2x,1)$
is positive definite and has operator norm one. 
Thus Assumptions \ref{ass: K C1} and
\ref{ass: K Global Lipschitz} hold with $L_K=1$.
%
% The Hessian of $K$ is positive definite at every $(x,u)$, so
% $K$ is strictly convex and Assumption \ref{ass: K C1} holds. Moreover, its operator norm is at most one; hence $\nabla K$ is globally Lipschitz with $L_K=1$, as required by Assumption \ref{ass: K Global Lipschitz}. 
To verify Assumption \ref{ass:existence_optimizer},
associate to each $u\in\mathcal U=L^2(0,T)$ the unique feasible state
%\[
$\displaystyle 
x_u(t)=e^{At}x_0+\int_0^t e^{A(t-s)}Bu(s)\,ds$.
%\]
The map $u\mapsto x_u$ is continuous and affine from $L^2(0,T)$ to $H^1(0,T)$,  
% Since
% $|\log\cosh a-\log\cosh b|\le|a-b|$ and 
so the map
% $u\mapsto\frac12\|u\|_{L^2(0,T)}^2$ is continuous,
% the function
\begin{equation}
	\label{eq:cost_example_strictconvex}
u\longmapsto
\int_0^T
\left(\log\cosh\bigl(x_u(t)\bigr)+\frac12u(t)^2\right)\,dt
\end{equation}
is continuous on $L^2(0,T)$, finite-valued and convex. 
% Since $u\mapsto x_u$ is affine and $K$ is convex, this function is convex. Being convex and continuous in the $L^2(0,T)$ norm, it is weakly lower semicontinuous on $L^2(0,T)$. 
% Furthermore,
% \[
% \int_0^T
% \left(\log\cosh\bigl(x_u(t)\bigr)+\frac12u(t)^2\right)\,dt
% \ge \frac12\|u\|_{L^2(0,T)}^2.
% \]
It is bounded below by $\frac12\|u\|_{L^2}^2$,
hence coercive, and attains its minimum by
\cite[Corollary 3.23, p. 71]{Brezis2011}.
This proves Assumption \ref{ass:existence_optimizer}.
Proposition \ref{prn:equilibrium_cont} 
gives the unique equilibrium
$\xi_\star=(x_\star-\bar x,u_\star,p_\star)$. 
% The cost is finite at $u=0$, so its infimum is
% finite and every minimizing sequence is bounded.  Since $L^2(0,T)$ is reflexive and the functional \eqref{eq:cost_example_strictconvex} 
% is finite-valued, convex, continuous, and coercive, it attains its minimum by
% \cite[Corollary 3.23, p. 71]{Brezis2011}.
% The minimizing control and its associated feasible state therefore solve \eqref{eq:ocp_cost}, proving Assumption  \ref{ass:existence_optimizer}. Since all assumptions are satisfied, Proposition \ref{prn:equilibrium_cont} identifies the unique equilibrium
% $\xi_\star=(x_\star-\bar x,u_\star,p_\star)$.
%
Now, define
\begin{equation}\label{eq:logcosh_dense_set}
	\mathcal C:=
	\{(\widetilde x,u,p)\in D(J):u\in H^1(0,T)\}.
\end{equation}
By \eqref{eqn:J_Domain}, this set contains
$[C_c^\infty(0,T)]^3$ and is dense in
$\mathcal X=[L^2(0,T)]^3$. 
Fix $\xi_0\in\mathcal C$ and let $\xi$ be the strong
solution given by Theorem~\ref{thm:global_wellposed_cont}.
As in the derivation of \eqref{eq:uniform_J_bound_cont},
Proposition \ref{prn:energy_cont} and \eqref{eq:bound}
give uniform bounds for $\xi(\tau)$ and $J\xi(\tau)$.
The first component of \eqref{eqn:J_Definition}
then gives
$M_p:=\sup_{\tau\ge 0}
\|p(\cdot,\tau)\|_{H^1(0,T)}<\infty$. 
As $K_u(x,u)=u$, the control equation in 
\eqref{eq:weighted_optimizer_cont} is
$u'=-u-Bp$ in $L^2(0,T)$ for a.e. $\tau\ge 0$.
Variation of constants gives
\begin{equation}\label{eq:logcosh_control_formula}
	\begin{aligned}
		u(\cdot,\tau)
		=e^{-\tau}u(\cdot,0)
		-B\int_0^\tau e^{-(\tau-s)}p(\cdot,s)\,ds 
	\end{aligned}
\end{equation}
and we have 
%\[
$\displaystyle 
\left\|\int_0^\tau e^{-(\tau-s)}p(\cdot,s)\,ds\right\|_{H^1}
\le M_p(1-e^{-\tau})$.
%\]
Since $u(\cdot,0)\in H^1(0,T)$, we get 
$u(\cdot,\tau)\in H^1(0,T)$ for $\tau\ge 0$, and 
\begin{equation}\label{eq:logcosh_control_H1}
	\begin{aligned}
		\|u(\cdot,\tau)\|_{H^1}
		&\le e^{-\tau}\|u(\cdot,0)\|_{H^1}
		+|B|M_p(1-e^{-\tau})\\
		&\le\|u(\cdot,0)\|_{H^1}+|B|M_p.
	\end{aligned}
\end{equation}
Thus, \eqref{eq:uniform_control_H1_cont} holds for every $\xi_0\in\mathcal C$. 
Corollary \ref{cor:control_regularity_compact_cont}
now gives relative compactness and strong convergence for every $\xi_0\in\mathcal C$. 
Since $\mathcal C$ is dense in $\mathcal X$,
Corollary \ref{cor:dense_convergence_cont} gives
\eqref{eq:compact_convergence_cont} for every
$\xi_0\in\mathcal X$. 
\end{exmp}
The next example does not satisfy the (strong) accretivity assumption in \cite[Proposition 2.5(ii)]{Gernandt2025}; 
%{\rev (which, for $C^1$ functions, is equivalent to strong convexity)} 
we show that a global exponential estimate uniform over
initial values cannot be obtained. Nevertheless, the results in this paper apply and provide strong convergence for every mild solution, as shown in  Example \ref{ex:logcosh_convergence}.
\begin{exmp}
\label{ex:logcosh_nonexpconv}
Consider the system from Example \ref{ex:logcosh_convergence} with $A=B=1$, $T=1$ and $x_0=0$. Then $\bar x=0$, $\widetilde x=x$ and $h_{\bar x}=0$. 
% The optimal control problem is 
% \[
% \min \int_0^1
% \left(
% \log\cosh x(t)
% +
% \frac12 u(t)^2
% \right)\,dt,
% \qquad
% \dot x=x+u,
% \;
% x(0)=0.
% \]
The unique optimal state and control are zero 
%The costate equation with terminal condition in 
%
and \eqref{eq:PMP} gives $p_\star=0$. 
Then $\xi_\star=0$, and Example \ref{ex:logcosh_convergence} gives that 
% $\|S_I(\tau)\xi_0\|_{[L^2(0,1)]^3}\rightarrow 0$ for every $\xi_0\in [L^2(0,1)]^3$.  
every mild trajectory converges to $\xi_\star$. 
In the notation of \cite[Proposition 2.5(ii)]{Gernandt2025},
$\bar x_1=(x_\star,u_\star)=(0,0)$ and
$\mathcal M_1(x,u)=(\tanh x,u)$.
Strong accretivity at $\bar x_1$ would require, for
$x(t)=R$ and $u(t)=0$, that $R\tanh R\ge cR^2$
for some $c>0$ and all $R>0$.
This fails as $\tanh R/R\to 0$ as $R\to\infty$.

Suppose that for some $M\ge 1$ and $\omega>0$, independent of $\xi_0$, such that $\|S_I(\tau)\xi_0\|_{\mathcal X}\le M e^{-\omega\tau}\|\xi_0\|_{\mathcal X}$ for all $\tau\ge 0$ and $\xi_0\in \mathcal X$.
Define %the bounded operator 
$Q(\widetilde x,u,p):=(0,u,0)$, 
%and write $\partial\Phi(\xi)=Q\xi+F(\xi)$, 
$F(\xi):=(\tanh\widetilde x,0,0)$ and  
$G:=J-Q$, with $D(G)=D(J)$. Since $J$ is skew-adjoint and $Q$ is bounded,
self-adjoint, and nonnegative, then $G$ generates a linear contraction semigroup $T_G$ %in $[L^2(0,1)]^3$ (%Lumer-Phillips theorem, 
(\cite[Theorem 3.8.4]{TucsnakWeiss2009}). Equation \eqref{eq:weighted_optimizer_cont} becomes
\begin{equation}
\label{eq:Exmp4_nonlineq}
%\frac{d\xi}{d\tau}=G\xi-F(\xi).
\xi'=G\xi-F(\xi).
\end{equation} 
Since $\|F(\xi)\|_{\mathcal X}\le1$, variation of
constants gives 
$
%\begin{aligned}
\|S_I(\tau)(R\zeta)-R T_G(\tau)\zeta\|_{\mathcal \mathcal X}
% &\le
% \int_0^\tau
% \|T_G(\tau-s)\|\,
% \|F(S_I(s)(R\zeta))\|_{\mathcal X}\,ds \\
% &
\le\tau, \\R>0, \; \zeta\in\mathcal X.
%\int_0^\tau 1\,ds = \tau
%\end{aligned}
$
%\] 
%Note that the above bound does not depend on $R$.
Divide by $R$ and take $R\rightarrow\infty$
gives  
%\[
$\displaystyle
\|T_G(\tau)\|\leq Me^{-\omega\tau}$, $\tau\geq 0$. 
This estimate extends with the same constants to the
complexification $\mathcal X_{\mathbb C}$.
We keep the notation $G$ and $T_G$ for the
complexified operator and semigroup.  

For each $n\ge 1$, let
$k_n\in(n\pi,n\pi+\pi/2)$ solve 
$\tan k_n=k_n$, and denote $\beta_n=\sqrt{1+k_n^2}$. 
%(chosen to ensure the boundary conditions $\widetilde x_n(0)=0$). 
%Then $k_n>n\pi\to\infty$.  
Take
\begin{equation}
\label{eq:def_pn_xntilde}
p_n(t)=c_n\sin(k_n(1-t)),
\qquad
\widetilde x_n(t)
=\frac{-\dot p_n(t)-p_n(t)}{i\beta_n},
\end{equation}
where $c_n>0$ is chosen so that
$\|\widetilde x_n\|_{L^2(0,1)}^2
+\|p_n\|_{L^2(0,1)}^2=1$.
%Since $k_n\cos k_n=\sin k_n$, we have
Then $\widetilde x_n(0)=0$ and $p_n(1)=0$ 
%
%Moreover, 
and $\ddot p_n=-k_n^2p_n$ gives 
\begin{equation}
		\label{eq:pn_xntilde_identity}
	\begin{aligned}
		-\dot p_n-p_n
		=i\beta_n\,\widetilde x_n,\quad
		-\dot{\widetilde x}_n+\widetilde x_n
		=i\beta_n\,p_n.
	\end{aligned}
\end{equation}
Take 
\begin{equation}
	\label{eq:def_un_zetan}
u_n=-\frac{p_n}{1+i\beta_n},
\qquad
\zeta_n=(\widetilde x_n,u_n,p_n)\in D(G).
\end{equation}
Substitute $\zeta_n$ into $G$ and use
\eqref{eqn:J_Definition} to get  
% $G(\widetilde x,u,p)=
% \bigl(
% -\dot p-p,-u-p,\,-\dot{\widetilde x}+\widetilde x+u
% \bigr)$. 
% Therefore,
%\[
\[
\displaystyle
\bigl(G-i\beta_n\,I\bigr)\zeta_n
=
\begin{pmatrix}
	-\dot p_n-p_n-i\beta_n\,\widetilde x_n\\
	-(1+i\beta_n)u_n-p_n\\
	-\dot{\widetilde x}_n+\widetilde x_n+u_n
	-i\beta_n\,p_n
\end{pmatrix}.
\]
%\]
Use \eqref{eq:def_pn_xntilde},  \eqref{eq:def_un_zetan} and \eqref{eq:pn_xntilde_identity} to get 
$\bigl(G-i\beta_n\,I\bigr)\zeta_n
=(0,0,u_n)$ and
%\[
$\displaystyle 
\|u_n\|_{L^2(0,1)}
=\frac{\|p_n\|_{L^2(0,1)}}{\sqrt{2+k_n^2}}
\le\frac{1}{\sqrt{2+k_n^2}}
\rightarrow 0$ as $k_n\to\infty$. 
%\] 
%
% Since $\|\zeta_n\|_{\mathcal X_{\mathbb C}}\ge1$,
% the normalized vectors
% $\eta_n:=\zeta_n/\|\zeta_n\|_{\mathcal X_{\mathbb C}}$
% satisfy
% %\[
% $\|\eta_n\|_{\mathcal X_{\mathbb C}}=1$,
% %\qquad
% $\bigl\|\bigl(G-i\sqrt{1+k_n^2}\,I\bigr)\eta_n
% \bigr\|_{\mathcal X_{\mathbb C}}
% \rightarrow 0$.
% %\]
The exponential estimate for $T_G$ implies 
$i\beta\in\rho(G)$ and
%\[
$\displaystyle 
\|(i\beta I-G)^{-1}\|\le\frac{M}{\omega}$ for every 
$\beta\in\mathbb R$ (see e.g. \cite[Proposition 2.3.1 and Corollary 2.3.3]
{TucsnakWeiss2009}.).
%\]
Because $\eta_n:=\zeta_n/\|\zeta_n\|\in D(G)$, we write
%\[
$\eta_n=
\bigl(i\beta_n\,I-G\bigr)^{-1}
\bigl(i\beta_n\,I-G\bigr)\eta_n$. 
%\] 
This gives
$
%\begin{aligned}
	1=\|\eta_n\|_{\mathcal X_{\mathbb C}} \le\frac{M}{\omega}
	\bigl\|\bigl(i\beta_n\,I-G\bigr)\eta_n
	\bigr\|_{\mathcal X_{\mathbb C}}
	\\ \rightarrow0,
%\end{aligned}
$
%and this is 
a contradiction. Thus the nonlinear semigroup $S_I$ admits no global exponential estimate with constants independent of the initial value.
%, but every mild trajectory converges strongly to $\xi_\star=0$.
\end{exmp}

\section{Conclusions and Outlook}
As discussed in \cite{Gernandt2025, vanderSchaft2026PDG, vanderSchaft2026PDGJuly} the continuous time primal-dual gradient algorithm provides a promising starting point for deriving suboptimal controllers for the optimal control problem of linear systems with strictly convex costs. 
Importantly, the algorithm defines an infinite-dimensional incremental port-Hamiltonian system, which can be converted into standard port-Hamiltonian form by using a shifted Hamiltonian. In the present paper this has served as point of departure for a rigorous analysis of well-posedness and convergence of the algorithm, exploiting the port-Hamiltonian structure and ensuing dissipation inequality. Apart from a slightly different formulation, a major difference with regard to \cite{Gernandt2025} has been the assumption of \emph{strict} convexity of the running cost instead of \emph{strong} convexity. Furthermore, convergence has been proved using LaSalle's Invariance principle, thereby offering a rigorous alternative to the proof context of \cite{Gernandt2025}. 
We believe that the port-Hamiltonian structure of the primal-dual gradient algorithm can be exploited for obtaining efficient suboptimal controllers, by discretizing the port-Hamiltonian partial-differential equations in a structure-preserving way.

\vspace{-0.5cm}
\appendix
%\appendices

\section{Proofs}

\subsection{A proof of Proposition \ref{prn:-J_max_monotone}}
\label{App: proof_Proposition1}
The domain \eqref{eqn:J_Domain} contains all smooth
compactly supported triples and is dense in $\mathcal X$ since $C_c^\infty(0,T;\mathbb R^n)
\times
L^2(0,T;\mathbb R^m)
\times
C_c^\infty(0,T;\mathbb R^n)
\subset D(J)$.
For $\xi=(\widetilde x,u,p)$ and
$\eta=(\widetilde y,w,q)$ in $D(J)$, integration by parts
in \eqref{eqn:J_Definition} gives
$\langle J\xi,\eta\rangle_{\mathcal X}
	+\langle\xi,J\eta\rangle_{\mathcal X} =-[p^\top\widetilde y+\widetilde x^\top q]_0^T=0$.
Hence $J\subseteq -J^*$. 
Conversely, let $\eta=(\widetilde y,w,q)\in D(J^*)$.
Then
$\langle J\xi,\eta\rangle_{\mathcal X}
=\langle \xi,f\rangle_{\mathcal X}$,
$\xi\in D(J)$. 
Taking $\xi=(\varphi,0,0)$ and $\xi=(0,0,\varphi)$,
with $\varphi\in C_c^\infty(0,T;\mathbb R^n)$,
in the definition of $J^*$ and applying
\cite[Proposition 8.3, p. 206]{Brezis2011}
componentwise gives 
$\widetilde y,q\in H^1(0,T;\mathbb R^n)$. 
Taking $\xi=(0,v,0)$, with arbitrary
$v\in L^2(0,T;\mathbb R^m)$, gives $f_2=B^\top q$, and so we have 
\[
J^*\eta=
\begin{pmatrix}
	\dot q+A^\top q\\
	B^\top q\\
	\dot{\widetilde y}-A\widetilde y-Bw
\end{pmatrix}.
\]
Integration by parts for arbitrary $\xi\in D(J)$
then yields
$p(0)^\top\widetilde y(0)-\widetilde x(T)^\top q(T)=0$.
Since $p(0)$ and $\widetilde x(T)$ are arbitrary,
$\widetilde y(0)=q(T)=0$.
Thus $D(J^*)=D(J)$ and $J^*=-J$, so both $J$ and $-J$
are skew-adjoint. 
Then, $-J$ is monotone by skew-symmetry.
For every $\lambda>0$ and $\xi\in D(J)$,
$\|(\lambda I-J)\xi\|_{\mathcal X}^2
=\lambda^2\|\xi\|_{\mathcal X}^2+\|J\xi\|_{\mathcal X}^2$.
Since $J$ is closed, we have that
$R(\lambda I-J)$ is closed. Moreover,
$R(\lambda I-J)^\perp=\ker((\lambda I-J)^*)=\ker(\lambda I+J)=\{0\}$, where the last equality follows from $0=\langle(\lambda I+J)\eta,\eta\rangle_{\mathcal X}
=\lambda\|\eta\|_{\mathcal X}^2$ for $\eta\in\ker(\lambda I+J)$.
So $R(\lambda I-J)=\mathcal X$, and $-J$ is maximal monotone using Theorem \ref{thm: Theorem 1.141}. 

\vspace{-0.4cm}
\subsection{A proof of Proposition \ref{prn:Phi_max_monotone}}
\label{App: proof_Proposition2}

For every $t\in[0,T]$, the mapping
$(x,u)\mapsto K(x+\bar x(t),u)$ is convex, since $K$
is convex (see Assumption \ref{ass: K C1}) and the change of variables is affine \cite[Theorem 2.1.3(vi), p. 43]{Zalinescu2002}. 
Convexity of $\Phi$ then follows from \cite[p. 439]{Rockafellar1971}.
The required measurability and integrable lower bound
for each trajectory are verified in items (ii)-(iii) below. Moreover, $(0,0,0)\in D(J)\subseteq D(\Phi)$,
so $\Phi\not\equiv+\infty$. Since $\Phi$ never takes the
value $-\infty$, it is proper. 
Conditions (i)-(iv) in \cite[Proposition 2.53, p. 94]{BarbuPrecupanu2012} are satisfied with $g(t,x,u):= K(x+\bar x(t),u)$, $\alpha(t)=
\begin{pmatrix}
K_x\bigl(\bar{x}(t),0\bigr)\\
K_u\bigl(\bar{x}(t),0\bigr)
\end{pmatrix}$, 
$\beta(t)= K\bigl(\bar{x}(t),0\bigr)$ 
and  $p=2$. 
Let
$I_g:L^2(0,T;\mathbb R^{n+m})\to(-\infty,+\infty]$ be the integral functional
associated with $g$, using the same
integrability convention as $\Phi$.
By \cite[Proposition 2.53, p. 94]{BarbuPrecupanu2012},
$I_g$ is lower semicontinuous. The projection
$\pi:\mathcal X\to L^2(0,T;\mathbb R^{n+m})$,
$\pi(\widetilde x,u,p)=(\widetilde x,u)$,
is bounded and linear, and $\Phi=I_g\circ\pi$.
Therefore, $\Phi$ is lower semicontinuous (in the norm
topology of $\mathcal X$). Since $\Phi$ is proper convex and lower semicontinuous and $\mathcal X$ is a Hilbert space, Theorem \ref{thm: Theorem 2.43} implies that $\partial\Phi$ is maximal monotone on $X$. %This concludes the proof.

\vspace{-0.3cm}
\subsection{A proof of Proposition \ref{prn:equilibrium_cont}}
\label{App: proof_Proposition4}
By Assumption \ref{ass:existence_optimizer}, problem \eqref{eq:ocp_cost} admits an optimal pair
$(x_\star,u_\star)\in\mathcal F$.
Since $\mathcal F$ is affine and $K$ is strictly convex by Assumption \ref{ass: K C1}, this pair is unique, up to equality a.e., as already noted in Subsection \ref{sect_IPHsys}.

We apply the maximum principle in
\cite[Section 4.1.3, Theorem 4.5, pp. 243-244]
{BarbuPrecupanu2012}.
To fit the setting, take the state and control
spaces to be $\mathbb R^n$ and $\mathbb R^m$, respectively,
the integrability exponent to be $2$, the evolution operator
to be $e^{A(t-s)}$, and the control operator to be
$u\mapsto Bu$. Take zero forcing, no state constraint,
and the running cost $L(t,x,u)=K(x,u)$.
The fixed initial and the free terminal state are
encoded by the endpoint cost $\ell(x_0,b)=0$ and $\ell(a,b)=\infty$ whenever $a \neq x_0$.
% 	0,       & a=x_0,\\
% 	+\infty, & a\neq x_0
% %\[
% $\displaystyle 
% \ell(a,b):=
% \begin{cases}
% 	0,       & a=x_0,\\
% 	+\infty, & a\neq x_0.
% \end{cases}$
% %\]
For these finite-dimensional dynamics, a continuous mild
state associated with $u\in\mathcal U$ belongs to
$H^1(0,T;\mathbb R^n)$, since its weak derivative is
$Ax+Bu\in L^2(0,T;\mathbb R^n)$.
Consequently, the finite-cost admissible pairs in the 
formulation in \cite[Section 4.1.3, Theorem 4.5, pp. 243-244]
{BarbuPrecupanu2012} coincide with $\mathcal F$. It can be directly checked that the hypotheses (A)-(E) of this theorem, listed in \cite[pp. 234-236]{BarbuPrecupanu2012}, are satisfied. 
%
% {\rev Verify (A)-(E) are SATISFIED - 
% IN Appendix \ref{proofprprn:equilibrium_cont} - TO BE ADDED}
%
Theorem 4.5 therefore characterizes optimal pairs by the
extremality conditions of
\cite[Definition 4.4, equations (4.17)-(4.21),
pp. 242-243]{BarbuPrecupanu2012}.
Denote the dual arc used in \cite{BarbuPrecupanu2012} by $\widehat p$.
Since there is no state constraint, the normal-cone
measure in those conditions vanishes.
Moreover,
%\[
$\partial\ell(x_0,x_\star(T))
=\mathbb R^n\times\{0\}$, 
%\]
so the endpoint condition gives $\widehat p(T)=0$.
Since $K$ is continuously differentiable, the 
subgradient condition and adjoint representation in \cite{BarbuPrecupanu2012} reduce to
%\[
$B^\top\widehat p(t)
=K_u(x_\star(t),u_\star(t))$
a.e. on $(0,T)$, 
%\]
and
%\[
$\displaystyle
\widehat p(t)
=-\int_t^T e^{A^\top(s-t)}
K_x(x_\star(s),u_\star(s))\,ds$,
$t\in[0,T]$.
%\]
Their dual arc $\widehat p$ has the opposite sign to the costate $p_\star$ in \eqref{eq:PMP}. Setting $p_\star=-\widehat p$ therefore
gives \eqref{eq:costate_formula_cont} and the stationarity
condition in \eqref{eq:PMP}. 

From Assumption \ref{ass: K Global Lipschitz} and
$(x_\star,u_\star)\in
L^2(0,T;\mathbb R^n)\times L^2(0,T;\mathbb R^m)$,
we have
$K_x(x_\star,u_\star)\in L^2(0,T;\mathbb R^n)$. 
The integral representation
\eqref{eq:costate_formula_cont} consequently defines a
continuous, absolutely continuous costate that satisfies
%\[
$\dot p_\star
=-A^\top p_\star-K_x(x_\star,u_\star)$ 
a.e. on $(0,T)$, $p_\star(T)=0$.
%\]
Its derivative belongs to $L^2(0,T;\mathbb R^n)$, so
$p_\star\in H^1(0,T;\mathbb R^n)$.
Thus all equations in \eqref{eq:PMP} hold, and
$\xi_\star=(x_\star-\bar x,u_\star,p_\star)\in D(J)$
by \eqref{eqn:J_Domain} and $\bar x(0)=x_0$. 
Using \eqref{eqn:J_Definition},
\eqref{eq:subgradient_singleton_cont}, and
\eqref{eq:lift_forcing_cont}, the optimality system
\eqref{eq:PMP} gives
$J\xi_\star-\partial\Phi(\xi_\star)+h_{\bar x}=0$.
Hence $\xi_\star$ is an equilibrium of
\eqref{eq:weighted_optimizer_cont}.

Conversely, let $\xi=(\widetilde x,u,p)\in D(J)$ be an equilibrium and put $x=\widetilde x+\bar x$.
The same identities, together with the boundary conditions in \eqref{eqn:J_Domain}, imply \eqref{eq:PMP} with
$(x,u,p)$ in place of $(x_\star,u_\star,p_\star)$.
With $\widehat p=-p$, $q=K_x(x,u)$, $w\equiv0$,
and $\widehat p_T=0$, the pair $(x,u)$ together with these auxiliary variables satisfies the %extremality 
conditions in \cite[Definition 4.4, (4.17)-(4.21), pp. 242-243]{BarbuPrecupanu2012}. 
Using \cite[Theorem 4.5]{BarbuPrecupanu2012}, every extremal pair is optimal, therefore $(x,u)$ is optimal. 
Uniqueness of the optimal pair gives
$(x,u)=(x_\star,u_\star)$. 
The terminal-value adjoint equation has the unique solution \eqref{eq:costate_formula_cont}, so $p=p_\star$ and $\xi=\xi_\star$. 
Finally, \eqref{eq:weighted_operator_cont} and the
invertibility of $W$ give
$\{\xi\in D(\mathcal A_W):\mathcal A_W\xi=0\}
=\{\xi_\star\}$.
\end{document}